\newcommand{\Rmnum}[1]{\expandafter\@slowromancap\romannumeral #1@}
\numberwithin{Assumption}{section} \numberwithin{Corollary}{section}
\numberwithin{Definition}{section} \numberwithin{equation}{section}
\numberwithin{Example}{section} \numberwithin{Lemma}{section}
\numberwithin{Proposition}{section} \numberwithin{Remark}{section}
\numberwithin{Theorem}{section}
\newtheorem{definition}{Definition}[section]
\newtheorem{theorem}{Theorem}[section]
\newtheorem{lemma}{Lemma}[section]
\newtheorem{corollary}{Corollary}[section]
\newtheorem{remark}{Remark}[section]
\newcommand{\bt}{\begin{theorem}}
	\newcommand{\et}{\end{theorem}}
\newcommand{\bl}{\begin{lemma}}
	\newcommand{\el}{\end{lemma}}
\newcommand{\bd}{\begin{definition}}
	\newcommand{\ed}{\end{definition}}
\newcommand{\bc}{\begin{corollary}}
	\newcommand{\ec}{\end{corollary}}
\newcommand{\bp}{\begin{proof}}
	\newcommand{\ep}{\end{proof}}
\newcommand{\bx}{\begin{example}}
	\newcommand{\ex}{\end{example}}
\newcommand{\bi}{\begin{exercise}}
	\newcommand{\ei}{\end{exercise}}
\newcommand{\br}{\begin{remark}}
	\newcommand{\er}{\end{remark}}
\newcommand{\be}{\begin{equation}}
	\newcommand{\ee}{\end{equation}}
\newcommand{\bal}{\begin{align}}
	\newcommand{\bn}{\begin{enumerate}}
		\newcommand{\en}{\end{enumerate}}
	\newcommand{\ba}{\begin{align}}
		\newcommand{\ea}{\begin{align}}
			\newcommand{\bg}{\begin{align*}}
				\newcommand{\eg}{\end{align*}}
			\newcommand{\bcs}{\begin{cases}}
				\newcommand{\ecs}{\end{cases}}
			\newcommand{\N}{{\mathbb N}}
			\newcommand{\R}{{\mathbb R}}
			\newcommand{\bean}{\begin{eqnarray*}}
				\newcommand{\eean}{\end{eqnarray*}}
			\def\text#1{{\rm #1}}
			\def\dis{\displaystyle}
			\numberwithin{equation}{section}
\begin{document}
				\theoremstyle{plain}

			\title{\bf  Normalized solutions for the nonlinear Schr\"{o}dinger equation with  potentials \thanks{ 	
					E-mails: pxq52918@163.com (X.  Peng),   mrizzi1988@gmail.com, matteo.rizzi@uniba.it (M. Rizzi)}}
			
			\date{}
			\author{
				{\bf    Xueqin Peng$^{\mathrm{a}}$,\; Matteo Rizzi$^{\mathrm{b}}$ \textsuperscript{\Letter}}\\
				\footnotesize \it  $^{\mathrm{a}}$ Department of Mathematical Sciences, Tsinghua University, Beijing 100084, China\\
				\footnotesize \it	$^{\mathrm{b}}$ Dipartimento di Matematica, Universit\'{a} degli studi Aldo Moro di Bari, Bari 70125, Italy
			}

				%%%%%%%-------------------------------?a?¡ì??a===Abstract-------------------------------
				\maketitle
				\begin{center}
					\begin{minipage}{130mm}
						\begin{center}{\bf Abstract }\end{center}
						
					In this paper, we find normalized solutions to the following Schr\"{o}dinger equation 
					\begin{equation*}
						\begin{cases}
							-\Delta u-\frac{\mu}{|x|^2}h(x)u+\lambda u =f(u)\quad\text{in}\quad\mathbb{R}^{N},\\
							\dis	u>0,\quad \int_{\mathbb{R}^{N}}u^2dx=a^2,
						\end{cases}
					\end{equation*}
					%in the case \textcolor{red}{$p\in(2+\frac{4}{N},6)$},
					where  $N\geq3$, $a>0$ is fixed, $f$ satisfies mass-subcritical growth conditions and $h$ is a given bounded function with $||h||_\infty\le 1$. The $L^2(\R^N)$-norm of $u$ is fixed and $\lambda$ appears as a Lagrange multiplier. Our solutions are constructed by minimizing the corresponding energy functional on a suitable constraint. Due to the presence of a possibly nonradial term $h$, establishing compactness becomes challenging. To address this difficulty, we employ the splitting lemma to exclude both the vanishing and the dichotomy of a given any minimizing sequence for appropriate \( a > 0 \).
					Furthermore, we show that if $h$ is radial, then radial solutions can be obtained for any $a>0$. In this case, the radial symmetry allows us to prove that such solutions converge to a ground state solution of the limit problem as $\mu \to 0^+$.

                    %Our results is twofold: (i) if $V $ is the Hardy type potential, i.e.,   $V(x)=-\frac{\mu}{|x|^2}$, we construct a global minimizer with negative
				%	energy  for any $a>0$ in the $L^2$-subcritical case; (ii) if $V$ is  non-positive, by applying a penalization approach, we construct a local minimizer with negative
					%energy for suitable $a>0$  in the $L^2$-supercritical case. Moreover, a nonexistence result is proved. The novelty in the proof lies in the fact  that it is not based on $L^2$-scaling technique introduced by Jeanjean to induce a $(PS)$ sequence.  Our main approaches involve the splitting lemma, penalization method and regularity estimate.

						%					
						%					This study can be considered as a counterpart of the  logarithmic  scalar field equation   in the context of normalized solutions,
						%					\begin{equation}
							%						-\dl u+ \lambda u=\mu u\log u^2~~~\text{in }\RN, \ \ u \in H^1(\R^N),
							%					\end{equation}
						%				 and seems to be the first contribution regarding existence of normalized ground states for the   
						%						

						\vskip0.1in
						{\bf Key words:}  Schr\"{o}dinger equation; Normalized solution; Variational methods; Global minimizer.

						\vskip0.1in
						{\bf 2020 Mathematics Subject Classification:} 35J10, 35J50, 35J60.
						
						\vskip0.23in
						
					\end{minipage}
				\end{center}

					%%%%%%-------------------------------Section 1======= ?¡ì??y??====Introduction-------------------------------
					\vskip0.23in
					\section{Introduction}
					\setcounter{equation}{0}
					\setcounter{Assumption}{0} \setcounter{Theorem}{0}
					\setcounter{Proposition}{0} \setcounter{Corollary}{0}
					\setcounter{Lemma}{0}\setcounter{Remark}{0}
					\par
					Consider the following time-dependent Schr\"{o}dinger  equation
					\begin{equation}\label{eq1.1}
						i\partial_t\psi+\Delta\psi+V(x)\psi +g(|\psi|)\psi=0\qquad\text{in}~(0,\infty)\times\mathbb{R}^{N
},
					\end{equation}
					where the function $\psi=\psi(t,x):(0,\infty)\times \mathbb{R}^N\rightarrow\mathbb{C}$ is complex valued, $g$ is real valued and $V:\R^N\to\R$ is a given potential. An important topic is to search standing wave solutions of \eqref{eq1.1}, namely solutions of the form
\begin{equation*}
    \psi(t,x)=e^{i\lambda t}u(x),\quad\lambda\in\R,\quad u:\R^N\to \mathbb{R}.
\end{equation*}
In this case, $\psi$ is a solution to \eqref{eq1.1} if and only if $u$ solves the following semilinear elliptic equation	\begin{equation}\label{eq1.2}
-\Delta u+\lambda u+V(x)u=f(u)\quad\text{in}\quad \R^N,
\end{equation}
			where $f(u)=g(|u|)u$.
				Because of its importance in many different physical frameworks, many authors have investigated the
Schr\"{o}dinger equation. At this point, there are two different ways to approach \eqref{eq1.2} according to the role of $\lambda$:
\begin{description}
    \item[(1)] the frequency $\lambda$ is a fixed and assigned parameter,
     \item[(2)] the $L^2(\R^N)$-norm of the solution is given and the frequency $\lambda$ is an unknown of the problem and appears as a Lagrange multiplier.
\end{description}
In the first case, there are many results in the literature which are focused on existence, nonexistence and multiplicity of solutions to (\ref{eq1.2}). Some of them can be found, for instance, in %the
%literature in this direction is huge and we do not even make an attempt to summarize it, see e.g. 
\cite{BL1983,LPZ,L1984} and the references therein. In those papers the authors apply variational methods (see e.g. \cite{Tru, Rab1, SM, m1}) to construct such solutions. In fact, it is easy to see that the critical points of the following functional 
\begin{equation*} 
							I (u)=\frac{1}{2}\int_{\mathbb{R}^N}|\nabla u|^2dx+\frac{1}{2}\int_{\mathbb{R}^N}(V(x)+\lambda)u^2dx
							- \int_{\mathbb{R}^N}F(u)dx
						\end{equation*}
give rise to solutions of (\ref{eq1.2}).
 
					\vskip 0.1in
					On the other hand, the second case has   attracted significant attention during the last few years. It is normally referred to as the \textit{fixed mass problem}, which consists in looking for solutions to \eqref{eq1.2} on the $L^2$-constraint
					\begin{equation}\label{eq1.3}
						 \mathcal{S}_a:=\{u\in H^1(\R^N):||u||_2=a\}.
					\end{equation}
		 These kind of solutions, which have recently gained interest in physics, are known as \textit{normalized solutions}. Indeed, from the physical point of view, this approach turns out to be more meaningful since it offers a better insight of the variational properties of the stationary solutions of (\ref{eq1.2}), such as stability or instability. Another physical motivation to study normalized solutions is that the $L^2(\R^N)$-norm of the standing wave solutions to \eqref{eq1.1} is preserved in time.\\ 
         
         Compared to the fixed frequency case, the study of normalized solutions is more complicated because the terms in the corresponding energy functional scale differently. The main difficulties and challenges are based on several aspects: (i)  the classical mountain-pass theorem is not applicable to construct a  $(PS)$ sequence; (ii) since $\lambda$ is unknown, the Nehari manifold cannot be used anymore, which brings more obstacles to prove the boundedness of the $(PS)$ sequence; (iii)  the compactness of the $(PS)$ sequence is more challenging even if one considers the radial case since the embedding of $H^1_r(\R^N)\hookrightarrow L^2(\R^N)$ is not compact.

						\vskip 0.1in
		Most of papers are devoted to the autonomous case, that is $V(x)\equiv0$. For results in this direction, we refer to e.g. \cite{B2013,JJ,PPV,S1,S2,WW}. However, the case $V(x)\not\equiv 0$ is totally different, especially if $V$ is not radial, so that it is not possible to work in the radial setting and use the compactness of the embedding $H_r^1(\R^N)\hookrightarrow L^p(\R^N)$, for $p\in(2,2^*)$. As a consequence, it is much more complicated to recover the compactness of (PS)-sequences. From the perspective of physics, $V(x)$ represents an external potential that influences the behavior of the stationary waves.\\
        
        A natural approach to such a question is by variational methods.
		Indeed, if  $V \in  L^r(\R^N )$ for some $r \in [\frac{N}{2}, +\infty], r \geq 1$, solutions to \eqref{eq1.2}-\eqref{eq1.3} 
		can be found as critical points of the energy functional
			\begin{equation} \label{def-E}
		E(u)=\frac{1}{2}\int_{\mathbb{R}^N}|\nabla u|^2dx+\frac{1}{2}\int_{\mathbb{R}^N}V(x)u^2dx
		- \int_{\mathbb{R}^N}F(u)dx
		\end{equation}
		on $\mathcal{S}_a$. 
		Now we recall some existing results in this direction. It turns out that the geometry of $E$
		on $\mathcal{S}_a$ changes according to the growth condition of $f$ in relation with the mass critical exponent $\bar{p}=2+\frac{4}{N}$. In this respect, among many results, we refer the reader to the
		recent paper \cite{Ikoma=CV} by Ikoma and Miyamoto, where the existence of global minimizers is obtained for non-positive potentials. After
					that, Ikoma and Miyamoto  \cite{IM2} applied their argument to a minimization problem
					with two constraint conditions and potentials. On the other hand, in the mass-supercritical case, the functional $E$ is unbounded
					from below on $\mathcal{S}_a$, thus the problem cannot be solved by (global) minimization arguments, and
					even the appropriate definition of ground state is not clear. For  (step well) trapping potentials, i.e. potentials 
					satisfying
					\begin{equation*}
						 \lim_{|x|\rightarrow +\infty}V(x)=+\infty,
					\end{equation*}
					the trapping nature of the potential provides
				compactness, which guarantees the existence of solutions which are local minimizers of $E$ on $\mathcal{S}_a$, see e.g. \cite{NTV}.  In  \cite{bm}, Bartsch et al. considered weakly repulsive potentials,
					i.e.
					\begin{equation*}
						  V(x)\geq \liminf_{|x|\rightarrow +\infty}V(x)>-\infty 
					\end{equation*}
					and they applied a new variational principle exploiting the Pohozaev identity which can be used to obtain existence of solutions with high Morse index. For weakly attractive potential, that is
						\begin{equation*}
						V(x)\leq \limsup_{|x|\rightarrow +\infty}V(x)<+\infty ,
					\end{equation*}
				Molle et al. \cite{MRV} obtained a mountain pass solution and a local minimizer, under suitable assumptions about $f$ and $V$. For other contributions in this direction, we refer the reader to \cite{LM, PR, ZZ} and the references therein.

							\vskip0.1in
				It is interesting to mention that, up to now, it has not been considered that problem  \eqref{eq1.2}-\eqref{eq1.3} in the $L^2$-subcritical regime involving  the ``Hardy potential" (or ``inverse-square potential")  $V(x):=-\frac{\mu}{|x|^2}$ in the linear part except \cite{LZ}, where they studied the mixed non-linearities by using the Pohozaev manifold approach developed by Soave \cite{S1,S2}. The potential with this decay rate plays a critical role in non-relativistic quantum mechanics, as it represents an intermediate threshold between two distinct classes of potentials: regular potential and singular potential, we refer to  \cite{FLS} for more details. Besides, it also arises in many other
						areas such as quantum mechanics, nuclear physics, molecular physics and quantum cosmology \cite{Fe}.
						\vskip 0.1in
						 Due to its broad relevance,  one of the interest of the present paper is to investigate this problem.  Precisely, we consider the following problem:
							\begin{equation}\label{q}
								\begin{cases}
									-\Delta u-\frac{\mu}{|x|^2} h(x)\color{black}u+\lambda u=f(u)\quad\text{in}\quad\mathbb{R}^{N} ,\tag{$\mathcal{Q}$}\\
									 	u>0,~ \int_{\mathbb{R}^{N}}u^2dx=a^2,
								\end{cases}
							\end{equation}
							where $0\le\mu<\bar{\mu}:=\left(\frac{N-2}{2}\right)^2$, $h\in L^\infty(\R^N)$ with $\|h\|_\infty\le 1,\, h\ne 0$ and $f$ satisfies the following conditions:
							
							\vskip0.1in
							\noindent	$(f_1)$ 					 $f(t)\in C(\R,\R)$ is odd   and there exist constants $q   \in (2,2+\frac{4}{N})$ and $\alpha\in(0,\infty)$ \color{black}such that 
							\begin{equation}
								\lim\limits_{ t\rightarrow 0^+}\color{black}\frac{f(t)}{t^{q-1}}=\alpha.
							\end{equation}
						 
							\noindent	$(f_2)$ There is $r\in(2,2+\frac{4}{N})$ \color{black} such that $\frac{f(t)}{t^{r-1}}$ is increasing on $(0,+\infty)$.
							\vskip0.1in
								\noindent	$(f_3)$ There exist $c_0,t_0>0$ and $p\in(2,2+\frac{4}{N})$ such that 
								 \begin{equation*}
								f(t)\le c_0 t^{p-1},\quad\forall\,t>t_0.
                                \end{equation*} % If we use it for $q=3$ (see Lemma \ref{lem2.3}), we should state it for $q=3$. 
							\vskip0.05in 
						 	\color{black}The easiest example of non-linearity $f$ satisfying $(f_1)-(f_3)$ is $f(t)=|t|^{p-2}t$ with $p\in(2,2+\frac{4}{N})$. However, it is possible to verify that the function 
						 	$
						 		f(t)=|t|^{q-2}t +|t|^{p-2}t\ln(1+|t|)
						 $
						  satisfies $(f_1)-(f_3)$ as well, provided $p,\,q\in(2,2+\frac{4}{N})$. The non-linearities which fulfill $(f_1)-(f_3)$ will be referred to as \textit{mass-subcritical} non-linearities.\\

                          Problem \eqref{q} has a variational structure. In fact, under the assumptions $(f_1)-(f_3)$, the solutions to \eqref{q} are the crtical points of the energy functional
                          \begin{equation}\notag
                          J_\mu(u):=\frac{1}{2}\int_{\R^N}|\nabla u|^2dx-\frac{\mu}{2}\int_{\R^N}\frac{h(x)}{|x|^2}u^2(x)dx-\int_{\R^N}F(u) dx,\quad\forall\, u\in H^1(\R^N)
                          \end{equation}
					on $\mathcal{S}_a$, where $F(t):=\int_0^t f(s)ds$. We note that $J_\mu$ is well defined in $H^1(\R^N)$ for any $\mu\in[0,\bar{\mu})$, with $\bar{\mu}=\left(\frac{N-2}{2}\right)^2$, thanks to the Hardy-Sobolev inequality	
                    \begin{equation*}
                                        \int_{\R^N}\frac{u^s}{|x|^s}dx\leq  (\frac{s}{N-s})^s\int_{\R^N}|\nabla u|^sdx\qquad\forall\,u\in W^{1,s}(\R^N),\,1<s<N
                                    \end{equation*}
                    applied with $s=2$.\\
                    
					%It is interesting to introduce the function $h$ otherwise with $h=1$ we can reduce ourselves to the radial setting and the splitting Lemma is no more necessary (I think). We may add another result with $h$ radial (including $h=1$), in which we construct a radial solution. 
                        For $\mu=0$, the existence of solutions to Problem \eqref{q} was treated in \cite{shibata}. More precisely, setting
\begin{equation}\label{def-m-0}
m^0(a):=\inf_{u\in\mathcal{S}_a}J_0(u),\qquad\forall\, a>0,
\end{equation}
Shibata \cite{shibata} proved the following result.
\begin{theorem}\label{th-limit-pb}
Assume that $f$ satisfies $(f_1)-(f_3)$. Then there exists $a_0>0$ such that, for any $a>a_0$, there exists a solution $(v_a,\lambda_a)\in H^1(\R^N)\times (0,\infty)$ to the problem
\begin{equation}
                        \label{limit-pb}
                        \begin{cases}
									-\Delta u+\lambda u=f(u)\quad\text{in}\quad\mathbb{R}^{N} ,\\
								\int_{\R^N}u^2(x)dx=a^2,\,u>0,
					\end{cases}   
                        \end{equation}    
such that $J_0(v_a)=m^0(a)\in(-\infty,0)$.
\end{theorem}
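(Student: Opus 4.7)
The strategy is to obtain $v_a$ as a global minimizer of $J_0$ on $\mathcal{S}_a$ via the concentration--compactness principle, which is the standard approach in the mass-subcritical regime. The argument splits into four steps: (i) $J_0$ is coercive and bounded below on $\mathcal{S}_a$; (ii) $m^0(a) < 0$ for $a$ large enough; (iii) minimizing sequences are relatively compact up to translations; (iv) positivity and existence of the Lagrange multiplier.

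For (i), assumptions $(f_1)$ and $(f_3)$ yield the growth bound $|F(t)| \le C(|t|^q + |t|^p)$ with $q, p \in (2, 2+4/N)$, and the Gagliardo--Nirenberg inequality gives
\[
\int_{\mathbb{R}^N} F(u)\, dx \le C\bigl(a^{q(1-\gamma_q)} \|\nabla u\|_2^{q\gamma_q} + a^{p(1-\gamma_p)} \|\nabla u\|_2^{p\gamma_p}\bigr)
\]
with $q\gamma_q, p\gamma_p < 2$, whence $m^0(a) > -\infty$ and every minimizing sequence is bounded in $H^1(\mathbb{R}^N)$. For (ii), I would test $J_0$ on the $L^2$-preserving family $a\phi_\sigma$ with $\phi_\sigma(x) = \sigma^{N/2}\phi(\sigma x)$, $\phi \in \mathcal{S}_1$, exploit the asymptotic $f(t) \sim \alpha t^{q-1}$ near zero from $(f_1)$, and optimize in $\sigma$ to obtain a negative value once $a$ exceeds a threshold $a_0 > 0$.

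The main obstacle is step (iii). Given a bounded minimizing sequence $\{u_n\} \subset \mathcal{S}_a$, translation invariance allows weak limits to vanish, so I would invoke Lions' concentration--compactness lemma. Vanishing is ruled out because Lions' vanishing lemma forces $\|u_n\|_r \to 0$ for all $r \in (2, 2^*)$, and interpolation with the growth estimate on $F$ yields $\int F(u_n)\,dx \to 0$, contradicting $m^0(a) < 0$. Dichotomy is excluded by the strict subadditivity
\[
m^0(a) < m^0(b) + m^0\bigl(\sqrt{a^2 - b^2}\bigr) \qquad \forall\, b \in (0, a),
\]
which follows from a scaling argument exploiting $q, p > 2$ together with the strict negativity from step (ii). Concentration then produces, up to translation, a subsequence converging strongly in $L^r$ for $r \in (2, 2^*)$ to a limit $v_a \in \mathcal{S}_a$; weak lower semicontinuity of the Dirichlet energy combined with the strong $L^r$-convergence gives $J_0(v_a) = m^0(a)$.

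Finally, for (iv), since $f$ is odd $F$ is even, so $|v_a|$ is also a minimizer and I may assume $v_a \ge 0$. The Lagrange multiplier rule provides $\lambda_a \in \mathbb{R}$ with $-\Delta v_a + \lambda_a v_a = f(v_a)$, and testing this equation against $v_a$ while using $J_0(v_a) = m^0(a) < 0$ yields $\lambda_a > 0$; the strong maximum principle then upgrades $v_a \ge 0$ to $v_a > 0$.
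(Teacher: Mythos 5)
The paper does not prove this statement: it is quoted from Shibata \cite{shibata} (the authors explicitly write ``Shibata proved the following result''), so there is no internal proof to compare your attempt against. That said, your sketch is the classical Lions concentration--compactness route and is essentially sound, and it is also the route Shibata follows. Your step (i) is exactly the paper's Lemma \ref{lem2.2} (with $\mu=0$); your exclusion of vanishing via $\int F(u_n)\,dx\to 0$ contradicting $m^0(a)<0$ is the standard argument; the strict subadditivity you invoke to exclude dichotomy is the paper's Lemma \ref{lem2.3}(ii), and it really rests on $(f_2)$ (namely $F(\theta t)\ge\theta^{r}F(t)$ for $\theta\ge 1$ with $r>2$, plus $\int F(u_n)\,dx\ge C>0$ along minimizing sequences) rather than merely on $q,p>2$ as you phrase it; and the sign of the multiplier is obtained exactly as in \eqref{lambda}, using $f(t)t\ge rF(t)>0$ together with $m^0(a)<0$. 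Two remarks on how this relates to what the paper actually does. First, the paper's own compactness machinery (deployed for $\mu>0$ in Section \ref{sec2}) is organized differently: it first upgrades the minimizing sequence to a bounded Palais--Smale sequence via Ekeland's principle (Lemma \ref{lemma2.4}) and then applies a splitting/profile-decomposition lemma (Lemma \ref{splitting}), ruling out the nontrivial profiles with the subadditivity inequalities of Lemma \ref{lem2.3}(iii); your route applies the trichotomy directly to the minimizing sequence and must handle the dichotomy cut-off functions by hand, but avoids the Palais--Smale machinery. Second, a small caveat on the threshold: under $(f_1)$ with $\alpha>0$ one in fact has $m^0(a)<0$ for \emph{every} $a>0$ (this is proved in Lemma \ref{lem2.2}), so $a_0$ does not arise from your step (ii); in Shibata's more general setting $a_0=\inf\{a>0:\,m^0(a)<0\}$ can be positive, and then the strict subadditivity needed in step (iii) requires knowing $m^0<0$ at the sub-masses as well --- automatic here, but worth stating. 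Note finally that the paper effectively reproves (and strengthens) this theorem for all $a>0$ by combining Theorem \ref{thm-rad} with $\mu=0$ and Lemma \ref{lemma-rad-rearr}, i.e.\ by minimizing over radial functions and using rearrangement, which is yet another admissible route.
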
  
Problem \eqref{limit-pb}, which corresponds to the case $\mu=0$, will be referred to the \textit{limit problem}. We note that the assumptions about $f$ in \cite{shibata} are slightly weaker. However $(f_1)-(f_3)$ are enough in our context.\\

The solution constructed in Theorem \ref{th-limit-pb} corresponds to a global minimizer of $J_0$ on $\mathcal{S}_a$ with negative energy, which exists thanks to the mass-subcritical growth of $f$. In fact, if for instance $f(t)=|t|^{p-2}t$ with $p$ mass-supercritical, that is $p\in(2+\frac{4}{N},2^*)$, where $2^*=\frac{2N}{N-2}$, the energy functional $J_\mu$ is unbounded from below on $\mathcal{S}_a$ for any $\mu\in[0,\bar{\mu})$, a detailed discussion is given in Remark~\ref{rem2.2}.
 However, in some special cases, such as for instance $\mu=0$, solutions to \eqref{q} are known to exist non-linearities with mass-supercritical growth as well. An example of such solutions were constructed through a mountain-pass argument in \cite{JJ}.\\ %We will see that, even in the case of quite general potentials $V$, the mass subcritical growth is actually a necessary condition to get minimizers of the energy functional $E$ defined in \eqref{def-E}. In fact \\

%\color{blue}Non-existence result.\\

For $\mu\in(0,\bar{\mu})$, a radial ground state solution $U$ to the problem
\begin{equation}
\label{fixed-fr-pb}
\begin{cases}
-\Delta u-\frac{\mu}{|x|^2}u+ u=f(u)\quad\text{in}\quad\mathbb{R}^{N} ,\\
u\in H^1(\R^N),\,u>0,
\end{cases}   
\end{equation}
was constructed in \cite{LLT=JDE} for a quite wide class of non-linearities $f$, including those functions $f$ which satisfy $(f_1)-(f_3)$ with $p,\,q,\,r\in(2, 2^*)$. We stress that in the aforementioned paper the frequency is fixed and there is no mass constraint. Moreover, the non-linearity is allowed to have mass-superctical growth. \\
                        
                        As a consequence, if we take the pure power non-linearity $f(t)=|t|^{p-2}t$ with $p\in(2, 2^*)\setminus \{2+\frac{4}{N}\}$ and $h=1$, a scaling argument proves the existence of a solution to Problem \eqref{q} with $h=1$ for any $a>0$. In fact, given a solution $U$ to \eqref{fixed-fr-pb}, the function $u_a(x):=\nu^{-\frac{2}{p-2}}U(x/\nu)>0$ is a solution to \eqref{q} with $h=1$ and $\lambda=\nu^{-2}$ if and only if $$\nu=\left(\frac{a}{||U||_2}\right)^{\frac{2(p-2)}{N(p-2)-4}}>0.$$
                        However, for other non-linearities and for non-constant $h$, the scaling argument does not work and the existence of solutions to Problem \eqref{q} is a highly non-trivial issue, which is solved in the next Theorem. To the best of our knowledge, this is the first attempt to address the problem in the setting 
                         $h\neq1$.\\

                        More precisely, setting
                        \begin{equation}
                        \label{def-m-mu}
                        m^\mu(a):=\inf_{u\in\mathcal{S}_a}J_\mu(u)  \qquad\forall\, a>0,\,\mu\in[0,\bar{\mu}), 
                        \end{equation}
                        we have the following statement.
                        \begin{theorem}\label{thm1.1}
								Assume that $N\geq3, (f_1)-(f_3)$ hold, $h\in L^\infty(\R^N)$ with $\|h\|_\infty\le 1$ and $h\ne 0$, $h\ge 0$. \color{black}Then  there exists $a_0>0$ such that, for all $a>a_0$ and for all $\mu\in(0,\bar{\mu})$, \color{black}the problem \eqref{q} admits a solution $(u^\mu,\lambda^\mu)\in H^1(\R^N)\times [-\frac{2 m^\mu(a)}{a^2},\infty)$ with $u^\mu>0$ and $J_\mu(u^\mu)=m^\mu(a)\in(-\infty,0)$. 
							\end{theorem}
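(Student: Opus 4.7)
The strategy is to construct $u^\mu$ as a global minimizer of $J_\mu$ on $\mathcal{S}_a$. As a first step I would show that $J_\mu$ is coercive and bounded below on $\mathcal{S}_a$: the Hardy-Sobolev inequality together with $\|h\|_\infty\le 1$ gives
\[
-\frac{\mu}{2}\int_{\R^N}\frac{h(x)}{|x|^2}u^2\,dx\ \ge\ -\frac{\mu}{2\bar{\mu}}\int_{\R^N}|\nabla u|^2\,dx,
\]
which leaves a strictly positive multiple of $\|\nabla u\|_2^2$ in the quadratic part of $J_\mu$ since $\mu<\bar{\mu}$. Combined with the Gagliardo-Nirenberg inequality and the mass-subcritical growth from $(f_1),(f_3)$, this yields $m^\mu(a)>-\infty$ and $H^1$-boundedness of any minimizing sequence.

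Next I would establish the strict inequality $m^\mu(a)<m^0(a)$ for $a>a_0$, which is how the Hardy term is exploited. Starting from the minimizer $v_a$ provided by Theorem \ref{th-limit-pb}, the hypothesis $h\ge 0$, $h\ne 0$, together with translation-invariance of $J_0$, allows me to choose $x_0$ so that $\int_{\R^N}h(x)|x|^{-2}v_a^2(x-x_0)\,dx>0$, and then
\[
m^\mu(a)\ \le\ J_\mu(v_a(\cdot-x_0))\ =\ m^0(a)-\frac{\mu}{2}\int_{\R^N}\frac{h(x)}{|x|^2}v_a^2(x-x_0)\,dx\ <\ m^0(a)<0.
\]

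The main step, and the one I expect to be the chief obstacle, is compactness. Given a minimizing sequence $(u_n)$ (which I may take nonnegative by passing to $|u_n|$), coercivity yields $u_n\rightharpoonup u^\mu$ weakly in $H^1(\R^N)$. I would apply a profile decomposition (the \textit{splitting lemma} mentioned in the abstract)
\[
u_n=u^\mu+\sum_{j\ge 1}w_j(\cdot-y_n^j)+r_n,\qquad |y_n^j|\to\infty,\qquad r_n\to 0\ \text{in}\ L^p(\R^N),\ p\in(2,2^*),
\]
with $a^2=\|u^\mu\|_2^2+\sum_j\|w_j\|_2^2$. Since the translations escape to infinity, each bubble $w_j$ asymptotically loses its interaction with the Hardy potential, so $m^\mu(a)=J_\mu(u^\mu)+\sum_{j\ge 1}J_0(w_j)$. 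Vanishing $u^\mu\equiv 0$ would force $m^\mu(a)\ge m^0(a)$, contradicting the strict inequality in the previous step. Proper dichotomy $0<\|u^\mu\|_2<a$ I would rule out by establishing the strict subadditivity $m^\mu(a)<m^\mu(b)+m^0(c)$ for $b^2+c^2=a^2$, $b\in(0,a)$, obtained by combining an approximate minimizer for $m^\mu(b)$ with an approximate minimizer for $m^0(c)$ placed at a \emph{fixed} point where $h(x)/|x|^2>0$ instead of letting it escape, thereby strictly decreasing $J_\mu$; this contradicts the reverse $\ge$ delivered by the decomposition. The whole compactness analysis is the delicate point precisely because the nonradial $h$ prevents the usual reduction to $H^1_r(\R^N)$.

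Finally, lower semicontinuity of $\|\nabla\cdot\|_2^2$ combined with strong $L^p$-convergence (from strong $L^2$-convergence and $H^1$-boundedness by interpolation, $p\in[2,2^*)$) gives $J_\mu(u^\mu)=m^\mu(a)$, so $u^\mu$ is a minimizer and $(u^\mu,\lambda^\mu)$ solves the Euler-Lagrange equation by the Lagrange multiplier rule. Testing this equation against $u^\mu$ and using $J_\mu(u^\mu)=m^\mu(a)$ produces the identity
\[
\lambda^\mu a^2+2m^\mu(a)=\int_{\R^N}\bigl(f(u^\mu)u^\mu-2F(u^\mu)\bigr)\,dx,
\]
and the monotonicity $(f_2)$ with $r>2$ implies $f(t)t\ge rF(t)>2F(t)$ on $(0,\infty)$, whence $\lambda^\mu\ge -2m^\mu(a)/a^2>0$. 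Since $u^\mu\ge 0$, the strong maximum principle applied to the Euler-Lagrange equation (rewritten with the zeroth-order coefficient $\lambda^\mu-\mu h(x)/|x|^2$ on the left) upgrades this to $u^\mu>0$, completing the proof.
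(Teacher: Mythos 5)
Your overall architecture (coercivity via Hardy plus Gagliardo--Nirenberg, a splitting/profile decomposition, exclusion of vanishing through the strict inequality $m^\mu(a)<m^0(a)$ obtained from Shibata's minimizer $v_a$ for $a>a_0$, sign of the multiplier from $(f_2)$, positivity by the maximum principle) matches the paper's proof, and those steps are essentially correct. There is, however, one genuine gap: your mechanism for excluding dichotomy. You propose to prove the strict subadditivity $m^\mu(a)<m^\mu(b)+m^0(c)$ by gluing an approximate minimizer for $m^\mu(b)$ to an approximate minimizer for $m^0(c)$ ``placed at a fixed point where $h(x)/|x|^2>0$''. This does not work as stated: if the second profile sits at a fixed location, the two profiles overlap, so the test function is no longer in $\mathcal{S}_a$ (the cross term in the $L^2$ norm does not vanish) and the energy does not split into $J_\mu(\phi)+J_\mu(\psi(\cdot-x_0))$; the uncontrolled interaction errors can be of either sign and there is no reason they are dominated by the fixed Hardy gain. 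If instead you separate the profiles to kill the interaction, the Hardy gain on the escaping bubble vanishes and you only recover the non-strict inequality $m^\mu(a)\le m^\mu(b)+m^0(c)$ (this is exactly Lemma \ref{lem2.3}(iii)). The strictness has to come from somewhere else.

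The missing ingredient is the strict sub-homogeneity of Lemma \ref{lem2.3}(ii): for $0<a_1<a_2$ one has $\frac{a_1^2}{a_2^2}m^\mu(a_2)<m^\mu(a_1)<0$. This is where assumption $(f_2)$ enters in an essential way (beyond the sign of $\lambda^\mu$): from the monotonicity of $f(t)/t^{r-1}$ one gets $F(ts)\ge t^rF(s)$ for $t\ge1$, and together with the fact that $\int_{\R^N}F(u_n)\,dx$ stays bounded away from $0$ along a minimizing sequence (because $m^\mu(a_1)<0$), scaling $u\mapsto tu$ yields the strict inequality. Once you have it, dichotomy is excluded cleanly: the splitting lemma plus Lemma \ref{lem2.3}(iii) force $m^\mu(a)=m^\mu(b)+\sum_j m^0(c_j)$, while $h\ge0$ gives $m^0(c_j)\ge m^\mu(c_j)$ and sub-homogeneity gives $m^\mu(b)+\sum_j m^\mu(c_j)>\bigl(\frac{b^2+\sum_j c_j^2}{a^2}\bigr)m^\mu(a)=m^\mu(a)$, a contradiction. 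The same inequality is also needed in your vanishing case when the decomposition produces $k\ge2$ bubbles, where your asserted bound $m^\mu(a)\ge m^0(a)$ requires $\sum_j m^0(c_j)\ge m^0(a)$ and hence sub-homogeneity of $m^0$. Two smaller points: the splitting lemma applies to bounded Palais--Smale sequences of $J_{\mu,\lambda}$, so you must first upgrade your minimizing sequence via Ekeland's variational principle (the paper's Lemma \ref{lemma2.4}) before decomposing it; and the weak lower semicontinuity of the quadratic part should be stated for the full form $\|\nabla u\|_2^2-\mu\int_{\R^N}h|x|^{-2}u^2\,dx$, which is a nonnegative equivalent quadratic form by Hardy's inequality, rather than for $\|\nabla\cdot\|_2^2$ alone.
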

                        
                        %\color{blue} good remark. we can try it. In that case, theorem 1.1 holds   true for $h\in L^{\infty}$ with $||h||_{\infty}\leq1$ and the solution we obtained may not be radial. If we further assume $h$ is radial, then making use of the compact embedding $H_r \hookrightarrow L^p$, we may obtain a radial solution.  
                        %\medbreak
                        %\color{black} %As we will show in Lemma \ref{lem2.2}, the functional $J _{\mu}$ is bounded from below on $\mathcal{S}_a$ and define \begin{equation*}m(a):= \inf_{u\in \mathcal{S}_a}\color{black}J_{\mu}(u).
								%\end{equation*}
                      % Our first result is as follows:
                        After that, we take $h$ to be radial and construct radial solutions to Problem \eqref{q} for mass-subcritical non-linearities, thanks to the compactness of the embedding $H^1_r(\R^N) \hookrightarrow L^p(\R^N)$ for any $p\in(2, 2^*)$. More precisely, for $a>0$ and $\mu\in[0,\bar{\mu})$ we set
                        $$\mathcal{S}_{a,r}:=\mathcal{S}_a\cap H^1_r(\R^N),\qquad m^\mu_r(a):=\inf_{u\in \mathcal{S}_{a,r}}J_\mu(u)$$
and we prove the following result.
\begin{theorem}\label{thm-rad}
Assume that $N\geq3,(f_1)-(f_3)$ hold, $h\in L^\infty(\R^N)$ is radial and fulfills $\|h\|_\infty\le 1$, $h\ne 0,\, h\ge 0$. Then for all $a>0$ and for all $\mu\in[0,\bar{\mu})$, the problem \eqref{q} admits a solution $(u^\mu,\lambda^\mu)\in H^1_r(\R^N)\times [-\frac{2 m_r^\mu(a)}{a^2},\infty)$ with $u^\mu>0$ and $J_\mu(u^\mu)=m^\mu_r(a)\in(-\infty,0)$.   Particularly, if $h=1$, then $u^\mu(x)\in C^2(\R^N \setminus \{0\})$ and
there are positive constants $\delta$ and $R$ such that for $|\alpha| \leq 2$,
\begin{equation*}
	|D^{\alpha}u^{\mu}(x)|\leq C\,\text{exp}(-\delta\, |x|),\quad \forall \, |x|\geq R
\end{equation*}
and
\begin{equation*}
	u^\mu(x)\leq C|x|^{-(\sqrt{\bar{\mu}}-\sqrt{\bar{\mu}-\mu})},\quad\, \forall\, x\in B_{\rho}(0)\setminus \{0\},
\end{equation*}
where  $\rho>0$ is sufficiently small.
 	\color{black}
\end{theorem}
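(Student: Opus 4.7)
The strategy is to realize $u^\mu$ as a global minimizer of $J_\mu$ on $\mathcal{S}_{a,r}$, using Strauss's compact embedding $H^1_r(\R^N)\hookrightarrow L^p(\R^N)$ for $p\in(2,2^*)$ to handle the non-linearity and thereby avoid the splitting argument (and the threshold $a>a_0$) used in Theorem~\ref{thm1.1}. I would first check that $m_r^\mu(a)\in(-\infty,0)$ for every $a>0$ and every $\mu\in[0,\bar\mu)$: Hardy's inequality gives $\tfrac12\|\nabla u\|_2^2-\tfrac{\mu}{2}\int hu^2/|x|^2\,dx\ge\tfrac12(1-\mu/\bar\mu)\|\nabla u\|_2^2$, and the mass-subcritical Gagliardo--Nirenberg estimate for $\int F(u)\,dx$ coming from $(f_1)$ and $(f_3)$ produces the lower bound, while the rescaling $v_t(x)=t^{N/2}v(tx)$ with $v\in\mathcal{S}_{a,r}$ fixed and $t\to 0^+$ gives the upper bound since the quadratic terms are $O(t^2)$ and $\int F(v_t)\,dx\asymp t^{N(q-2)/2}$ with $N(q-2)/2<2$. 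Then I prove strict monotonicity of $a\mapsto m_r^\mu(a)$: $(f_2)$ yields $F(\theta t)\ge\theta^r F(t)$ for $\theta\ge 1$, so for $u\in\mathcal{S}_{a,r}$ with $J_\mu(u)$ close to the infimum (for which $\int F(u)\,dx\ge|m_r^\mu(a)|/2>0$, since $J_\mu(u)<0$ and the quadratic part is non-negative) one has
\[
m_r^\mu(\theta a)\le J_\mu(\theta u)\le\theta^2 J_\mu(u)+(\theta^2-\theta^r)\!\int\! F(u)\,dx<m_r^\mu(a)\qquad\text{for every }\theta>1.
\]

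\textbf{Compactness and conclusion.} I take a minimizing sequence $u_n\in\mathcal{S}_{a,r}$ with $u_n\ge 0$ (WLOG, since $J_\mu(|u_n|)\le J_\mu(u_n)$), bounded in $H^1$ by the previous lower bound, and weakly convergent to some $u\in H^1_r(\R^N)$; Strauss gives $u_n\to u$ in $L^p$ for all $p\in(2,2^*)$, so $\int F(u_n)\,dx\to\int F(u)\,dx$ by $(f_1)$ and $(f_3)$. For the Hardy term I check that $hu/|x|^2\in H^{-1}(\R^N)$ (by Cauchy--Schwarz and Hardy, $\bigl|\int hu\phi/|x|^2\,dx\bigr|\le\|h\|_\infty\bar\mu^{-1}\|\nabla u\|_2\|\nabla\phi\|_2$), so the algebraic identity $u_n^2-u^2-(u_n-u)^2=2u(u_n-u)$ yields the Brezis--Lieb-type expansion
\[
\int\frac{h u_n^2}{|x|^2}\,dx=\int\frac{h u^2}{|x|^2}\,dx+\int\frac{h(u_n-u)^2}{|x|^2}\,dx+o(1),
\]
which combined with the standard decomposition of $\|\nabla u_n\|_2^2$ gives
\[
J_\mu(u_n)=J_\mu(u)+\tfrac12\|\nabla w_n\|_2^2-\tfrac{\mu}{2}\!\int\!\frac{h w_n^2}{|x|^2}\,dx+o(1),\qquad w_n:=u_n-u.
\]
Since $\|h\|_\infty\le 1$ and $\mu<\bar\mu$, the tail is $\ge\tfrac12(1-\mu/\bar\mu)\|\nabla w_n\|_2^2\ge 0$, so $J_\mu(u)\le m_r^\mu(a)$. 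If $\|u\|_2=b<a$, strict monotonicity gives $J_\mu(u)\ge m_r^\mu(b)>m_r^\mu(a)$, a contradiction; hence $\|u\|_2=a$, $u_n\to u$ strongly in $L^2$ (weak convergence plus norm convergence in a Hilbert space), $\|\nabla w_n\|_2\to 0$, and $u$ attains $m_r^\mu(a)$. The principle of symmetric criticality then produces the PDE with some $\lambda\in\R$, and testing against $u$ together with $\int uf(u)\,dx\ge r\int F(u)\,dx>2\int F(u)\,dx$ (from $(f_2)$) yields $\lambda\ge-2m_r^\mu(a)/a^2$; positivity on $\R^N\setminus\{0\}$ follows from the strong maximum principle applied away from the origin. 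When $h\equiv 1$, elliptic bootstrap gives $C^2$ regularity away from $0$; comparison with an exponential barrier (using $f(u)/u\to 0$ from $(f_1)$ plus Strauss radial decay) yields the exponential decay at infinity; and comparison with the less singular solution $r^{-\alpha_-}$, $\alpha_-=\sqrt{\bar\mu}-\sqrt{\bar\mu-\mu}$, of the Hardy indicial equation $\alpha^2-(N-2)\alpha+\mu=0$ gives the bound near the origin.

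\textbf{Main obstacle.} The delicate point is the Hardy term under weak $H^1_r$ convergence: the functional $u\mapsto\int u^2/|x|^2\,dx$ is \emph{not} weakly sequentially continuous on $H^1_r(\R^N)$, because the radial bubble $u_t(x)=t^{(N-2)/2}\phi(tx)$ is bounded in $H^1$ and converges weakly to $0$ while $\int u_t^2/|x|^2\,dx$ stays constant, leaking Hardy energy into the origin. The Brezis--Lieb decomposition above, combined with $\|h\|_\infty\le 1$ and $\mu<\bar\mu$, guarantees that any such concentrating tail carries non-negative energy and therefore cannot lower the infimum; the strict monotonicity of $m_r^\mu$ then rules out mass loss to the origin and upgrades the weak convergence to strong $H^1$ convergence.
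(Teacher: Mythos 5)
Your proposal is correct, but it takes a genuinely different route from the paper's proof of Theorem \ref{thm-rad}. The paper does \emph{not} run a direct minimization with a Brezis--Lieb splitting of the Hardy term: it first produces (via the radial version of Lemma \ref{lemma2.4}, i.e.\ Ekeland) a bounded Palais--Smale sequence $\{u_n\}\subset\mathcal{S}_{a,r}$ with bounded multipliers $\lambda_n\to\lambda^\mu\ge -2m_r^\mu(a)/a^2>0$, then observes that by Hardy's inequality the operator $L_\mu=-\Delta-\mu h/|x|^2+\lambda^\mu$ is coercive, hence boundedly invertible from $H^1$ to $H^{-1}$, and writes $u_n=L_\mu^{-1}(f(u_n)+o(1))$; Strauss compactness sends $f(u_n)\to f(u^\mu)$ in $H^{-1}$, so strong $H^1$ convergence (and with it $\|u^\mu\|_2=a$) falls out in one stroke, with no need to pass to the limit in the Hardy term separately and no subadditivity/monotonicity argument. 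Your route instead works with a bare minimizing sequence: you isolate the Hardy term's failure of weak continuity, control the tail via the Brezis--Lieb identity together with $\mu\|h\|_\infty<\bar\mu$ (which makes the leftover gradient-plus-Hardy energy nonnegative), and then exclude mass loss by strict monotonicity of $a\mapsto m_r^\mu(a)$. Both arguments are sound. What each buys: your version is more elementary and self-contained (no Ekeland, no PS machinery, no Lagrange multiplier needed until the very end), at the price of proving strict monotonicity of $m_r^\mu$ (which the paper establishes anyway, in the non-radial setting, in Lemma \ref{lem2.3}(ii), and whose proof transfers verbatim to $\mathcal{S}_{a,r}$); the paper's inverse-operator trick is slicker precisely because it absorbs the troublesome singular potential into a coercive linear operator, but it hinges on first knowing $\lambda^\mu>0$. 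Your derivation of $\lambda^\mu\ge -2m_r^\mu(a)/a^2$ from $tf(t)\ge rF(t)$ and your closing remarks on regularity and decay for $h=1$ (which the paper simply quotes from \cite{LLT=JDE}) are consistent with the statement. Two very minor points: the strict positivity of $u^\mu$ via the maximum principle is only justified on $\R^N\setminus\{0\}$ because of the singular coefficient (the paper glosses over the same issue), and in your monotonicity display the strict inequality $<m_r^\mu(a)$ requires $u$ to be taken sufficiently close to the infimum, as you implicitly do via the lower bound $\int F(u)\,dx\ge|m_r^\mu(a)|/2$.
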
      
We note that, for $\mu=0$, we include the result by Shibata \cite{shibata} for non-linearities $f$ fulfilling $(f_1)-(f_3)$, while the result is new for $\mu\in(0,\mu_0)$. 
%To state the asymptotic behavior of the normalized ground state of \eqref{q} as $\mu\rightarrow 0^+$, we rewrite $u$, $\lambda$ as $u^{\mu}$,  $\lambda^{\mu}$ to emphasize the dependence on  $\mu.$
\vskip 0.1in 
Finally, we describe the asymptotic behaviour of the radial solutions to \eqref{q} constructed in Theorem \ref{thm-rad} as $\mu\to 0^+$. We stress that the radial symmetry of $u^\mu$ is crucial here, since we will use the compactness of the embedding $H^1_r(\R^N)\hookrightarrow
 L^p(\R^N)$ for any $p\in(2,2^*)$. In particular, we will see that $u^\mu$ approaches a radial minimizer $u^0$ of $J_0$ on $\mathcal{S}_a$ as $\mu\to 0^+$.
\begin{theorem}\label{th1.2}
Let $a_0>0$ be the constant introduced in the statement of Theorem \ref{thm1.1}. Assume that $ (f_1)-(f_3)$ hold, $\mu\in(0,\bar{\mu})$, $h\in L^\infty(\R^N)$ is radial and fulfills $\|h\|_\infty\le 1$, $h\neq0$, $h\ge 0$. \color{black}  
 Let $(u^{\mu},\lambda^{\mu})\in H^1_r(\R^N)\times [-\frac{2 m^\mu(a)}{a^2},\infty)$ be the solution of \eqref{q} constructed in Theorem \ref{thm-rad} with $a>a_0$. Then there exists a solution $(u^{0},\lambda^{0})\in H^1_r(\R^N)\times [-\frac{2 m^0(a)}{a^2},0)$ to \eqref{q} with $\mu=0$ such that $u^{\mu}\to u^0$ strongly in $H^1(\R^N)$, $\lambda^\mu\to\lambda^0$ as $\mu\to 0^+$ and  $J_0(u^0)=m^0(a)\in(-\infty,0)$.
\end{theorem}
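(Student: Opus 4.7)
The plan is to use that $\{u^\mu\}_\mu$, constructed in Theorem \ref{thm-rad}, forms an almost-minimizing sequence for $J_0$ on $\mathcal{S}_{a,r}$ as $\mu\to 0^+$, and then to extract a strong $H^1_r$-limit via the compact radial embedding. Since $h\ge 0$ and $\mu\ge 0$, one has $J_\mu\le J_0$ pointwise on $H^1(\R^N)$, hence $m_r^\mu(a)\le m_r^0(a)$, and Schwarz symmetrization of the minimizer from Theorem \ref{th-limit-pb} gives $m_r^0(a)=m^0(a)<0$ for $a>a_0$. This yields the uniform upper bound $J_\mu(u^\mu)\le m^0(a)$, which combined with the Hardy inequality $\int h(x) u^2/|x|^2\,dx\le \bar{\mu}^{-1}\|\nabla u\|_2^2$ and the Gagliardo-Nirenberg control on $\int F(u)\,dx$ (whose gradient exponent is strictly less than $2$ thanks to $(f_1),(f_3)$) produces a uniform $H^1$-bound on $u^\mu$ as $\mu\to 0^+$, since the coefficient $\tfrac{1}{2}(1-\mu/\bar{\mu})$ of $\|\nabla u^\mu\|_2^2$ stays bounded away from zero.

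For any sequence $\mu_n\to 0^+$, along a subsequence $u^{\mu_n}\rightharpoonup u^0$ weakly in $H^1_r(\R^N)$ and strongly in $L^q(\R^N)$ for every $q\in(2,2^*)$. Interpolation using $(f_1)-(f_3)$ then yields $\int F(u^{\mu_n})\to \int F(u^0)$, while the Hardy term $\mu_n\int h(u^{\mu_n})^2/|x|^2\,dx=O(\mu_n)\to 0$. Hence $J_0(u^{\mu_n})=m_r^{\mu_n}(a)+o(1)\le m^0(a)+o(1)$; in particular $u^0\not\equiv 0$, otherwise $\limsup \tfrac{1}{2}\|\nabla u^{\mu_n}\|_2^2\le m^0(a)<0$, absurd. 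Setting $b:=\|u^0\|_2\in(0,a]$, the Brezis-Lieb splittings for $\|\cdot\|_2^2$ and $\|\nabla\cdot\|_2^2$ together with $\int F(u^{\mu_n}-u^0)\to 0$ lead to
\begin{equation*}
m^0(a)\ge \lim_{n\to\infty} J_0(u^{\mu_n})=J_0(u^0)+\lim_{n\to\infty}\tfrac{1}{2}\|\nabla(u^{\mu_n}-u^0)\|_2^2\ge m^0(b).
\end{equation*}
Combined with Shibata's strict monotonicity of $a\mapsto m^0(a)$ for $a>a_0$, this forces $b=a$ and the residual gradient to vanish, so $u^{\mu_n}\to u^0$ strongly in $H^1_r(\R^N)$ and $J_0(u^0)=m^0(a)$.

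Finally, testing the equation $-\Delta u^{\mu_n}-\mu_n h(x)|x|^{-2}u^{\mu_n}+\lambda^{\mu_n}u^{\mu_n}=f(u^{\mu_n})$ with $u^{\mu_n}$ and solving gives
\begin{equation*}
\lambda^{\mu_n}a^2=\int_{\R^N}f(u^{\mu_n})u^{\mu_n}\,dx-\|\nabla u^{\mu_n}\|_2^2+\mu_n\int_{\R^N}\frac{h(x)(u^{\mu_n})^2}{|x|^2}\,dx,
\end{equation*}
whose right-hand side converges by the strong $H^1$-convergence already established. Hence $\lambda^{\mu_n}\to\lambda^0$ for some $\lambda^0\in\R$, and passing to the weak limit in the equation produces $-\Delta u^0+\lambda^0 u^0=f(u^0)$. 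The bound $\lambda^0\ge -2m^0(a)/a^2$ follows from $\lambda^{\mu_n}\ge -2m_r^{\mu_n}(a)/a^2$ together with $m_r^{\mu_n}(a)=J_{\mu_n}(u^{\mu_n})\to J_0(u^0)=m^0(a)$.

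The main obstacle is ensuring that no mass is lost in the weak limit, i.e. $\|u^0\|_2=a$. The radial framework is essential here, since $H^1_r\hookrightarrow L^q$ compactly for $q\in(2,2^*)$ immediately excludes vanishing, and Brezis-Lieb combined with Shibata's strict monotonicity of $m^0$ excludes partial loss of mass; without radial symmetry, mass could escape along translation sequences, precisely the difficulty circumvented in Theorem \ref{thm1.1} via the splitting lemma.
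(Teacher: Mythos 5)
Your argument is correct, but the key compactness step is handled by a genuinely different mechanism than the paper's. The paper first proves two-sided continuity of $\mu\mapsto m^\mu_r(a)$ at $\mu=0$ and then upgrades the weak convergence $u^\mu\rightharpoonup u^0$ to strong $H^1$ convergence by the resolvent trick: writing $u^\mu=L_0^{-1}\big(f(u^\mu)+(\lambda^0-\lambda^\mu)u^\mu+\mu h(x)|x|^{-2}u^\mu\big)$ with $L_0=-\Delta+\lambda^0$ invertible (since $\lambda^0>0$), the right-hand side converges strongly in $H^{-1}(\R^N)$ thanks to the compact embedding $H^1_r\hookrightarrow L^s$, and the identity $J_0(u^0)=m^0(a)$ then follows from $m^\mu_r(a)\to m^0_r(a)=m^0(a)$ (Lemmas \ref{lemma-rad-rearr} and \ref{lemma2.9}). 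You instead use only the easy one-sided bound $m^\mu_r(a)\le m^0_r(a)=m^0(a)$ (immediate from $h\ge 0$) to turn $\{u^\mu\}$ into an almost-minimizing family for $J_0$ on $\mathcal{S}_{a,r}$, and you exclude loss of mass via the Hilbert-space splitting of $\|\nabla\cdot\|_2^2$ together with the strict monotonicity $m^0(b)>m^0(a)$ for $0<b<a$ (Lemma \ref{lem2.3}(ii) in the case $\mu=0$, or Shibata); strong $L^2$ convergence then follows from $\|u^0\|_2=a$ and strong $H^1$ convergence from the vanishing of the residual gradient. Your route avoids both the operator inversion and the harder half of the continuity of $m^\mu_r$ in $\mu$, at the price of invoking strict monotonicity of $m^0$; the paper's route obtains compactness without any energy comparison. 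Two minor remarks: the monotonicity of $m^0$ you need holds for all $0<b<a$ (you in fact apply it with $b$ possibly below $a_0$, which is fine since Lemma \ref{lem2.3}(ii) and the negativity of $m^0$ hold for every positive mass), and, exactly as in the paper, the limit $u^0$ is produced along a subsequence of an arbitrary $\mu_n\to 0^+$, so the convergence of the full family is asserted modulo the standard subsequence argument.
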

						% I believe that Theorem \ref{th1.2} is true when we take $h$ to be radial and we consider radially symmetric solutions $(u^\mu,\lambda^\mu)$. I believe that here we need the compactness of the embedding $H^1_{r}(\R^N)\subset L^p(\R^N)$ to conclude, the splitting Lemma is probably not enough.
                            
                        %\color{black}Theorem \ref{thm1.1} implements the work by Li et al. \cite{LLT=JDE}, where they investigated \eqref{q} with Berestycki-Lions type conditions. However, from the aspect of conditions, ours are
                        %mutually non-inclusive with those in \cite{LLT=JDE}.  Indeed, in their paper, they proved the constraint functional satisfied the mountain pass geometry  on $\mathcal{S}_a$  which will yield a mountain pass solution. However, since we impose the $L^2$-subcritical condition, the   constraint functional is bounded from below  on $\mathcal{S}_a$, thus it does not admit any mountain pass solutions.    
                         % \vskip0.1in 

						In the paper we will need the Gagliardo-Nirenberg inequality.	\color{black}\begin{remark}
								We recall the  Gagliardo-Nirenberg inequality \cite{L1959}:  for any $N\geq 3$ and $p\in[2,2^*]$ we have
									\begin{equation}
										\|u\|_p\leq C(N,p)\|\nabla u\|_2^{\mu_p}\|u\|_2^{1-\mu_p},
									\end{equation}
									where $\mu_p=N(\frac{1}{2}-\frac{1}{p})$ and $2^*= \frac{2N}{N-2}$.

							\end{remark}

				\vskip0.1in

				The paper is organized as follows. Section \ref{sec2} is dedicated to the proof of Theorem \ref{thm1.1}. In Section \ref{sect3}, we will treat the radial case and in Section \ref{se2}, we study the asymptotic behavior of the global minimizer. %In Section \ref{sec3}, we devoted to prove Theorem \ref{thm1.2} by applying a penalization technique.

				\medspace
				\vskip 0.1in

							\textbf{Notations} 
							Throughout this paper, we make use of the following notations:
							\vskip 0.1in
							\begin{itemize}
								\item $L^p(\mathbb{R}^N)$ ($p\in[1,\infty)$) is the Lebesgue space equipped with the norm
								\begin{equation*}
									\|u\|_p=\left(\int_{\mathbb{R}^N}|u|^pdx\right)^{\frac{1}{p}};
								\end{equation*}
								\item $L^{\infty}(\mathbb{R}^N)$  is the Lebesgue space equipped with the norm
								\begin{equation*}
									\|u\|_{\infty}=\mathop{\text{ess~sup}}\limits_{x\in \mathbb{R}^N}|u(x)|;
								\end{equation*}
								\item $H^1(\mathbb{R}^N)$ denotes the usual Sobolev space endowed with the norm
								\begin{equation*}
									\|u\|_{H^1}=\left(\int_{\mathbb{R}^N}(|\nabla u|^2+|u|^2)dx\right)^{\frac{1}{2}};
								\end{equation*}
                              \item $H^1_r(\R^N)$ denotes the space of radial functions $u\in H^1(\R^N)$;
                              \item $H^{-1}(\mathbb{R}^N)$ denotes the dual space of $H^1(\mathbb{R}^N)$;
                                \vskip0.05in
								\item $D^{1,2}(\mathbb{R}^N)$ is the Banach space given by
								\begin{equation*}
									D^{1,2}(\mathbb{R}^N)=\{u\in L^6(\mathbb{R}^N):\nabla u\in L^2(\mathbb{R}^N)\};
								\end{equation*}
                             % \item   $S:=\inf\limits_{u\in D^{1,2}(\mathbb{R}^N)\setminus\{0\}}\frac{||\nabla u||_2^2}{||u||_6^2}=3\pi\left(\frac{\Gamma(3)}{\Gamma(3/2)}\right)^{-2/3}$
						  %and $\Gamma$ is the $\Gamma$-function.
								\item $C$, $C_1,C_2,c_1,c_2\cdots$ represent positive constants whose values may change from line to line;
                                  \vskip0.05in
								\item $``\rightarrow"$ and $``\rightharpoonup"$ denote the strong and weak convergence in the related function spaces respectively;
                                  \vskip0.05in
								\item $o(1)$ denotes the quantity that tends to $0$;
                                  \vskip0.05in
								\item For any $x\in\mathbb{R}^N$ and $r>0$, $B_r(x):=\{y\in\mathbb{R}^N:|y-x|<r\}$.
							\end{itemize}

							%%%%%%-------------------------------Section 2===========-------------------------------
							\vskip0.23in

                            \section{Proof of Theorem \ref{thm1.1}}\label{sec2}
					In this  section, we always assume $h\neq0,\, h\ge 0$ and construct a solution $(u,\lambda)\in H^1(\R^N)\times (0,\infty)$ to \eqref{q} such that $u$ is a minimizer of the functional \begin{equation} 
						J_{\mu}(u)=\frac{1}{2}\int_{\mathbb{R}^N}|\nabla u|^2dx-\frac{\mu}{2}\int_{\mathbb{R}^N}\frac{h(x)}{|x|^2}u^2dx
						- \int_{\mathbb{R}^N}F(u)dx
					\end{equation}
					on the constraint $\mathcal{S}_a$, which concludes the proof of Theorem \ref{thm1.1}.
					\vskip 0.1in 
The following lemma  is necessary
for this argument.
\begin{lemma}\label{lemma1}
 (Lions \cite[Lemma I.1]{L1984}) Let $\{u_n\}_{n\in\mathbb{N}}$
  be a bounded sequence in
	 $H^1(\R^N )$ which satisfies
	 \begin{equation*}
	 	\sup_{z\in\R^N }\int_{B_1(z)}|u_n|^2dx \rightarrow  0,\quad \text{as}\quad  n\rightarrow \infty.
	 	\end{equation*}
	 	Then for any $r\in (2,2^*)$, 
	 	\begin{equation*}
	 		||u_n||_{L^r(\R^N)} \rightarrow  0,\quad \text{as}\quad  n\rightarrow \infty.
	 	\end{equation*}
\end{lemma}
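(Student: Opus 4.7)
The plan is to reduce the global convergence $\|u_n\|_{L^r(\R^N)}\to 0$ to a sum of local estimates on a uniform covering of $\R^N$, and then exploit the local Sobolev embedding to extract the smallness encoded in the hypothesis. First I would fix a covering $\{B_1(z_k)\}_{k\in \N}$ of $\R^N$ with the \emph{bounded overlap property}: every point of $\R^N$ lies in at most $M=M(N)$ of the balls. Such a covering is easily built from a lattice shift, and it yields $\sum_{k}\|u_n\|_{L^s(B_1(z_k))}^s\le M\,\|u_n\|_{L^s(\R^N)}^s$ and $\|u_n\|_{L^s(\R^N)}^s\le \sum_k \|u_n\|_{L^s(B_1(z_k))}^s$ for every $s\ge 1$.

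On each unit ball I would apply the Gagliardo-Nirenberg inequality
\begin{equation*}
\|u_n\|_{L^r(B_1(z_k))}^r\le C\,\|u_n\|_{L^2(B_1(z_k))}^{r(1-\mu_r)}\,\|u_n\|_{H^1(B_1(z_k))}^{r\mu_r},\qquad \mu_r=N\Bigl(\tfrac{1}{2}-\tfrac{1}{r}\Bigr)\in(0,1),
\end{equation*}
which follows from the embedding $H^1(B_1)\hookrightarrow L^{2^*}(B_1)$ together with Hölder interpolation between $L^2$ and $L^{2^*}$; the constant $C$ is independent of $z_k$ by translation invariance. Setting $\sigma_n:=\sup_{z\in\R^N}\|u_n\|_{L^2(B_1(z))}$, the hypothesis reads $\sigma_n\to 0$.

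The heart of the argument is to sum these local bounds and extract a positive power of $\sigma_n$. Writing $a_k=\|u_n\|_{L^2(B_1(z_k))}$, $b_k=\|u_n\|_{H^1(B_1(z_k))}$, $p=r(1-\mu_r)$ and $q=r\mu_r$, I would apply Hölder's inequality for sequences,
\begin{equation*}
\sum_k a_k^{p}\,b_k^{q}\le \Bigl(\sum_k a_k^{p\alpha}\Bigr)^{1/\alpha}\Bigl(\sum_k b_k^{q\alpha'}\Bigr)^{1/\alpha'},
\end{equation*}
choosing $\alpha>1$ so that $p\alpha\ge 2$ (then $a_k^{p\alpha}\le \sigma_n^{p\alpha-2}a_k^2$, whose sum is controlled by $\sigma_n^{p\alpha-2}\|u_n\|_{L^2(\R^N)}^2$ via the bounded overlap) and simultaneously $q\alpha'\ge 2$ (then $\sum_k b_k^{q\alpha'}\le (\sum_k b_k^2)^{q\alpha'/2}\le C\|u_n\|_{H^1(\R^N)}^{q\alpha'}$ using $\ell^2\hookrightarrow \ell^{q\alpha'}$ and bounded overlap again).

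The main technical obstacle is to verify that such an $\alpha$ exists throughout the full range $r\in(2,2^*)$: the two constraints split at the mass-critical threshold $r=2+\tfrac{4}{N}$, where $q=r\mu_r=2$. When $r\le 2+\tfrac{4}{N}$ (so $q\le 2$) the natural choice $\alpha=2/p$ works and the argument is essentially symmetric; when $r\ge 2+\tfrac{4}{N}$ one must take some $\alpha>2/p$, verifying case by case that $q\alpha'$ stays $\ge 2$. In either regime one ends up with a bound of the form
\begin{equation*}
\sum_k \|u_n\|_{L^r(B_1(z_k))}^r\le C\,\sigma_n^{\gamma}\,\|u_n\|_{L^2(\R^N)}^{\beta_1}\,\|u_n\|_{H^1(\R^N)}^{\beta_2},
\end{equation*}
with positive exponents $\gamma,\beta_1,\beta_2$. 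Since $\{u_n\}$ is bounded in $H^1(\R^N)$ and $\sigma_n\to 0$, the right-hand side tends to $0$, and hence $\|u_n\|_{L^r(\R^N)}\to 0$.
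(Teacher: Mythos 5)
The paper itself offers no proof of this lemma --- it is quoted directly from Lions --- so your proposal is measured against the standard argument rather than anything in the text. Your overall strategy (a uniform covering of $\R^N$ by unit balls with bounded overlap, the local interpolation $\|u_n\|_{L^r(B_1(z_k))}\le C\|u_n\|_{L^2(B_1(z_k))}^{1-\mu_r}\|u_n\|_{H^1(B_1(z_k))}^{\mu_r}$ coming from the Sobolev embedding on unit balls, then summation over $k$) is exactly the classical one and is sound. There is, however, one concrete flaw in the execution: for the argument to produce a positive power of $\sigma_n$ you need $p\alpha>2$ \emph{strictly}, since the exponent of $\sigma_n$ in your final bound is $\gamma=(p\alpha-2)/\alpha$. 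Your ``natural choice'' $\alpha=2/p$ in the regime $r\le 2+\tfrac{4}{N}$ gives $p\alpha=2$, hence $\gamma=0$: the estimate collapses to $\sum_k a_k^p b_k^q\le C\,\|u_n\|_{L^2(\R^N)}^{2/\alpha}\|u_n\|_{H^1(\R^N)}^{q}$, which is merely bounded and does not tend to zero. Nor is the argument ``essentially symmetric'' in that regime: the smallness hypothesis lives only in the factors $a_k$, while the $H^1$ factors $b_k$ carry no smallness at all.

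The gap is easily repaired. When $q<2$ the admissible window is $2/p<\alpha\le 2/(2-q)$ (and any $\alpha>\max\{1,2/p\}$ works when $q\ge 2$); this window is nonempty precisely because $p+q=r>2$. For instance $\alpha=2/(2-q)$ gives $q\alpha'=2$ and $\gamma=r-2>0$, and the conclusion follows. You could also avoid the H\"older-for-sequences step and the case analysis entirely by following Lions and Willem: prove the statement only for the single exponent $s=2+\tfrac{4}{N}$, for which $s\mu_s=2$, so that the local estimate reads $\int_{B_1(z)}|u_n|^s\,dx\le C\,\sigma_n^{s-2}\|u_n\|_{H^1(B_1(z))}^2$ and summing over the covering is immediate; the full range $r\in(2,2^*)$ then follows by interpolating $L^r$ between $L^2$ and $L^s$, or between $L^s$ and $L^{2^*}$, using the boundedness of $\{u_n\}$ in $H^1(\R^N)$.
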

                    
					 %First, we show $	J_{\mu}$ is bounded from  below on $\mathcal{S}_a$. 
                             In the next Lemma
                            we will show that for  any $a>0$, $J _{\mu}$ is bounded from below  and coercive \color{black}on $\mathcal{S}_a$. More precisely, % setting
                            %\begin{equation}\label{eq2.5}
									%m^\mu(a):= \inf_{u\in \mathcal{S}_a}\color{black}J_{\mu}(u),
								%\end{equation}
                            we have the following statement.
							\begin{lemma}\label{lem2.2}
								Assume that $N\geq3$, $(f_1)-(f_3)$ hold, $\mu\in[0,\bar{\mu})$ and $h\in L^\infty(\R^N)$ with $\|h\|_\infty\le 1$. Then $J_\mu$ is coercive on $\mathcal{S}_a$ and $m^\mu(a)\in(-\infty,0)$.
							
							\end{lemma}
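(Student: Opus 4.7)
The proof proposal has two parts: the lower bound (coercivity and $m^\mu(a)>-\infty$) and the upper bound ($m^\mu(a)<0$).

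For the lower bound, the key observation is that the Hardy--Sobolev inequality stated earlier in the introduction gives, for any $u\in H^1(\R^N)$,
\[
\int_{\R^N}\frac{u^2}{|x|^2}\,dx \;\leq\; \frac{1}{\bar\mu}\int_{\R^N}|\nabla u|^2\,dx.
\]
Combining this with $\|h\|_\infty \leq 1$ and $\mu<\bar\mu$ yields
\[
\frac{1}{2}\int_{\R^N}|\nabla u|^2\,dx-\frac{\mu}{2}\int_{\R^N}\frac{h(x)}{|x|^2}u^2\,dx \;\geq\; \frac{1}{2}\Bigl(1-\frac{\mu}{\bar\mu}\Bigr)\|\nabla u\|_2^2,
\]
so the quadratic part is coercive on $\mathcal{S}_a$. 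For the nonlinear part, $(f_1)$ controls $f$ near $0$ and $(f_3)$ controls it at infinity; combining these with the continuity of $f$, I would derive the pointwise bound $|F(t)|\leq C(|t|^q+|t|^p)$ with $p,q\in(2,2+\tfrac{4}{N})$. The Gagliardo--Nirenberg inequality recalled in the paper then gives
\[
\int_{\R^N}|F(u)|\,dx \;\leq\; C_1 a^{q(1-\mu_q)}\|\nabla u\|_2^{q\mu_q}+C_2 a^{p(1-\mu_p)}\|\nabla u\|_2^{p\mu_p},
\]
and the mass-subcritical assumption guarantees that both exponents $q\mu_q = N(q-2)/2$ and $p\mu_p = N(p-2)/2$ are strictly less than $2$. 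The quadratic Dirichlet term therefore dominates as $\|\nabla u\|_2\to\infty$, proving coercivity on $\mathcal{S}_a$ and simultaneously the lower bound $m^\mu(a)>-\infty$.

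For the upper bound, I would perform the $L^2$-preserving scaling $u_s(x):=s^{N/2}u(sx)$ with $s>0$, starting from an arbitrary nonnegative $u\in\mathcal{S}_a\cap C_c^\infty(\R^N)$. A direct change of variable gives $\|u_s\|_2=a$, $\|\nabla u_s\|_2^2 = s^2\|\nabla u\|_2^2$ and, using $\|h\|_\infty\leq 1$ together with Hardy,
\[
\Bigl|\frac{\mu}{2}\int_{\R^N}\frac{h(x)}{|x|^2}u_s^2\,dx\Bigr| \;\leq\; \frac{\mu s^2}{2\bar\mu}\|\nabla u\|_2^2.
\]
Hence the kinetic-plus-Hardy contribution to $J_\mu(u_s)$ is at most $O(s^2)$ as $s\to 0^+$. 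For the potential term, since $\|u_s\|_\infty = s^{N/2}\|u\|_\infty\to 0$, I can use $(f_1)$ to write, for $s$ small enough,
\[
F(u_s(x)) \;\geq\; \frac{\alpha}{2q}\,|u_s(x)|^q,
\]
which yields $\int F(u_s)\,dx \geq c\, s^{N(q-2)/2}\|u\|_q^q$ with $N(q-2)/2<2$. Therefore $J_\mu(u_s)\leq O(s^2)-c\,s^{N(q-2)/2}$, and since $N(q-2)/2<2$ the negative contribution dominates for $s>0$ sufficiently small. This produces an element of $\mathcal{S}_a$ on which $J_\mu$ is strictly negative, proving $m^\mu(a)<0$.

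The two steps are essentially independent and I do not expect a substantial obstacle in either; the only delicate point is the careful use of the Hardy inequality to absorb the singular term $-\tfrac{\mu}{2}\int\frac{h(x)}{|x|^2}u^2$ uniformly in $h$ with $\|h\|_\infty\leq 1$, which is exactly why the hypothesis $\mu<\bar\mu=\bigl(\tfrac{N-2}{2}\bigr)^2$ is needed and where one sees that taking $\|h\|_\infty\leq 1$ (rather than an arbitrary bound) is what keeps the quadratic form strictly positive definite. The rest is a standard combination of Gagliardo--Nirenberg estimates for the coercivity and an $L^2$-preserving dilation for the strict negativity of the infimum.
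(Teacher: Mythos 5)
Your proposal is correct and follows essentially the same route as the paper: Hardy plus Gagliardo--Nirenberg with the mass-subcritical exponents $q\mu_q, p\mu_p<2$ for coercivity, and the $L^2$-preserving dilation $u_s=s^{N/2}u(s\cdot)$ combined with $F(t)\geq\frac{\alpha}{2q}|t|^q$ near $0$ for the strict negativity of $m^\mu(a)$. The only (harmless, and in fact slightly more robust) deviation is that you absorb the singular term in the scaling step via the Hardy inequality into an $O(s^2)$ contribution, whereas the paper simply drops it using the standing assumption $h\geq 0$ of that section.
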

							\begin{proof}
								First, we prove $J _{\mu}$ is bounded from below on $\mathcal{S}_a$.
							  By $(f_1)$ and $(f_3)$, we have 
							 \begin{equation}\label{f'}
							 	|f(t)|\leq  C_1 |t|^{q-1}+C_2 |t|^{p-1}.
							 \end{equation}
							 Consequently, applying the Hardy and Gagliardo-Nirenberg inequalities,  we have for any $u\in \mathcal{S}_a$, 
							 \begin{equation}
                             \label{rel-coercivity}
							 	\begin{aligned}
							 	J_{\mu}(u)&=\frac{1}{2}\int_{\mathbb{R}^N}|\nabla u|^2dx-\frac{\mu}{2}\int_{\mathbb{R}^N}\frac{h(x)}{|x|^2} u^2dx
							 	- \int_{\mathbb{R}^N}F(u)dx\\&\geq 
							 	 \frac{ \bar{\mu}-\mu} {2\bar{\mu}}\color{black}\int_{\mathbb{R}^N}|\nabla u|^2dx -C_1  a^{(1-\mu_q)q} \Big( \int_{\R^N}|\nabla u|^2dx\Big)^{\frac{\mu_qq}{2}}-C_2  a^{(1-\mu_p)p} \Big( \int_{\R^N}|\nabla u|^2dx\Big)^{\frac{\mu_pp}{2}},
							 	\end{aligned}
							 \end{equation} 
							 this implies  $J _{\mu}$ is bounded from below  and coercive \color{black}on $\mathcal{S}_a$ since $\mu<\bar{\mu}$ and $\mu_qq,\mu_pp<2$. This shows that $m^\mu(a)\in\R$. 
							 
							 \vskip0.1in
							 %Define
							 %\begin{equation*} 
							 	%m(a):=\min_{u\in \mathcal{S}_a}J_{\mu}(u).
							 %\end{equation*}
							 Now we prove $m^\mu(a)<0$   for any $a>0$. 
							 \vskip0.1in The assumption $(f_1)$ yields that $\lim \limits_{t\rightarrow 0}\frac{qF(t)}{|t|^q}=\alpha>0$, then there is $\delta>0$ such that
							 \begin{equation*}
							 	\frac{qF(t)}{|t|^q}\geq \frac{\alpha}{2},\quad,\forall t\in[0,\delta].
							 \end{equation*}
							 Given $u\in  \mathcal{S}_a\cap L^{\infty}(\R^N)$ be a nonnegative function, we consider the following $L^2$-scaling
							\begin{equation}\label{l2}
								u_s(x):=s^{\frac{3}{2}}u(sx) ,\quad \forall  s\in\R,
							\end{equation}
							which conserves the $L^2$-norm, i.e., $||u_s||_2=||u||_2$.  Since $\mu>0,$ by direct computation, one gets
							
							\begin{equation*}
								J_{\mu}(u_s)\leq \frac{s^2}{2}\int_{\mathbb{R}^N}|\nabla u|^2dx 
								-s^{-N} \int_{\mathbb{R}^N}F(s^{\frac{N}{2}}u)dx.
							\end{equation*} 
							Choosing $s>0$ small such that $s^{\frac{N}{2}}u(x)\leq \delta$ for any $x\in \R^N$, thus $F(s^{\frac{N}{2}}u)\geq \frac{\alpha}{2q}s^{{\frac{Nq}{2}}}u^q$. As a consequence, it holds
								\begin{equation*}
								J_{\mu}(u_s)\leq \frac{s^2}{2}\int_{\mathbb{R}^N}|\nabla u|^2dx 
								-\frac{\alpha}{2q}s^{{\frac{N(q-2)}{2}}} \int_{\mathbb{R}^N}u^qdx.
							\end{equation*} 
							Since $q \in  (2, 2 + \frac{4}{N})$, increasing $|s|$ if necessary, we derive that $$\frac{s^2}{2}\int_{\mathbb{R}^N}|\nabla u|^2dx 
							-\frac{\alpha}{2q}s^{{\frac{N(q-2)}{2}}} \int_{\mathbb{R}^N}u^qdx=:A_s<0,$$
							this concludes the proof.
							\end{proof}
						
							\vskip0.1in

It follows from Lemma \ref{lem2.2} that any minimizing sequence $\{u_n\}\subset \mathcal{S}_a$ is bounded. As a consequence, the main difficulty to prove Theorem \ref{thm1.1} is to recover compactness of such minimizing sequences; indeed a given minimizing sequence $\{u_n\}$ could run
off to spatial infinity and/or spread uniformly in space. So, even up to translations, two possible
bad scenarios are possible:  \\
\begin{itemize}
    \item (vanishing) $u_n \rightharpoonup 0;$
    \item  (dichotomy) $u_n\rightharpoonup \bar{u}$ and $0<||\bar{u}||_2<a$.
\end{itemize}
~\\
The general strategy in the applications is to prove that any minimizing sequence weakly converges, up to
translation, to a function $\bar{u}$ which is different from zero, excluding the vanishing case. Then
one has to show that $||\bar{u}||_2=a$, which proves that dichotomy does not occur.  
\vskip0.1in
First we give an useful preliminary lemma.
\begin{lemma}\label{lemma2.4}
	Let $\{u_n\}\subset \mathcal{S}_a$ be a minimizing sequence of
	$m^\mu(a)$.
	\begin{description}
		\item[(1)] Let $|u_n|(x) := |u_n(x)|$, then $\{|u_n|(x)\}$ is also a minimizing sequence of $m ^{\mu}(a)$.
		\item[(2)] There exist $\{v_n\}\subset \mathcal{S}_a$ and $\{\lambda_n\}\subset \R$ such that $\{\lambda_n\}$ is bounded and
		\begin{equation*}
			||u_n-v_n||_{H^{1}}\rightarrow 0,\quad J'_{\mu}(v_n)+\lambda_nQ'(v_n)\rightarrow0\quad\text{strongly\quad in}\quad H^{-1}(\R^N),
		\end{equation*}
		where $Q(u) := \frac{1}{2}\int_{\R^N}|u|^2dx$. 
		%Furthermore, if $u_n$ is real-valued, then we may choose $v_n$ as real-valued function. 
	\end{description}
\end{lemma}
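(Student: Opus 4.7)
The plan is to treat the two assertions separately, with part (1) being essentially algebraic and part (2) a standard Ekeland-type construction on the $L^2$-sphere.

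For part (1), I will use the classical Stampacchia-type property that $u \in H^1(\R^N)$ implies $|u| \in H^1(\R^N)$ with $|\nabla |u|| = |\nabla u|$ almost everywhere, so the kinetic term is invariant under $u \mapsto |u|$. The Hardy-type potential term $\frac{\mu}{2}\int_{\R^N}\frac{h(x)}{|x|^2}u^2 dx$ depends only on $u^2$, hence is also invariant, and since $(f_1)$ says $f$ is odd, its primitive $F$ is even, so $F(|u|) = F(u)$ pointwise. Consequently $J_\mu(|u_n|) = J_\mu(u_n)$ and $\||u_n|\|_2 = \|u_n\|_2 = a$, which yields the first assertion.

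For part (2), the approach is to invoke Ekeland's variational principle on the complete metric space $(\mathcal{S}_a, \|\cdot\|_{H^1})$. Since Lemma~\ref{lem2.2} guarantees $J_\mu$ is bounded from below on $\mathcal{S}_a$ and $\{u_n\}$ is minimizing, Ekeland produces a sequence $\{v_n\} \subset \mathcal{S}_a$ with $\|v_n - u_n\|_{H^1} \to 0$, $J_\mu(v_n) \to m^\mu(a)$, and such that the tangential differential of $J_\mu\big|_{\mathcal{S}_a}$ at $v_n$ tends to zero. Because $Q'(v) = 2v$ and $\|v_n\|_2 = a > 0$, the set $\mathcal{S}_a$ is a $C^1$ Hilbert submanifold of $H^1(\R^N)$, so the Lagrange multiplier rule furnishes $\lambda_n \in \R$ with
\begin{equation*}
J'_\mu(v_n) + \lambda_n Q'(v_n) \to 0 \quad \text{in} \quad H^{-1}(\R^N).
\end{equation*}

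Finally, to obtain the boundedness of $\{\lambda_n\}$, I would test the previous identity against $v_n$ itself, which gives
\begin{equation*}
2 \lambda_n a^2 = -\langle J'_\mu(v_n), v_n\rangle + o(1),
\end{equation*}
since $\{v_n\}$ is bounded in $H^1(\R^N)$ by the coercivity part of Lemma~\ref{lem2.2}. Expanding the right-hand side, each of the three terms
\begin{equation*}
\int_{\R^N}|\nabla v_n|^2 dx, \qquad \mu \int_{\R^N}\frac{h(x)}{|x|^2} v_n^2 dx, \qquad \int_{\R^N} f(v_n) v_n dx
\end{equation*}
is uniformly bounded in $n$: the first by the $H^1$-boundedness, the second by the Hardy inequality combined with $\mu < \bar{\mu}$ and $\|h\|_\infty \leq 1$, and the third via \eqref{f'} combined with the Gagliardo-Nirenberg inequality, thanks to $p, q \in (2, 2+ \frac{4}{N})$. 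The only slightly delicate point, modest in nature, is the passage from the Ekeland inequality to the Lagrange multiplier identity and the uniform bound on $\lambda_n$, both of which reduce to the coercivity and growth estimates already established.
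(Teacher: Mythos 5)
Your proposal is correct and follows essentially the same route as the paper: part (1) via the invariance (up to the standard gradient inequality/equality for $|u|$) of each term of $J_\mu$ under $u\mapsto|u|$, and part (2) via Ekeland's variational principle on $\mathcal{S}_a$ followed by testing the approximate Lagrange identity against $v_n$ and bounding each term using $H^1$-boundedness, Hardy, and the growth condition \eqref{f'}. The only slip is the harmless factor of $2$: since $Q(u)=\tfrac12\int_{\R^N}|u|^2dx$, one has $Q'(v_n)v_n=a^2$, so the identity should read $\lambda_n a^2=-\langle J'_\mu(v_n),v_n\rangle+o(1)$, exactly as in the paper's choice $\lambda_n=-J'_\mu(v_n)v_n/a^2$.
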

\begin{proof}
	\textbf{(1)} Noting that $||~|u_n|(x)||_2=||u_n||_2=a$, thus $J_{\mu}(|u_n|)\geq m^\mu(a)$. By  \cite[Theorem 6.17]{LL2001}, one has  $||\nabla |u_n|~||_2\leq ||\nabla u_n||_2$, thus $J_{\mu}(|u_n|)\leq J_{\mu}(u_n)$, which yields that $|u_n|$ is also a minimizing sequence of $m^\mu(a)$.
	\vskip0.1in
	
	\textbf{(2)} First we remark that $Q$ is smooth, $Q '(u)u = 2Q(u)$ and $\mathcal{S}_a=Q^{-1}(a^2/2)$. One can easily check $\mathcal{S}_a$ is a closed Hilbert manifold with codimension $1$. The tangent space of $\mathcal{S}_a$ at $u$ and the tangent derivative $(J_{\mu}|_{\mathcal{S}_a})'$ of $E$ at $u$ are given by  
	\begin{equation*}
		T_u \mathcal{S}_a=\{v\in H^1(\R^N)|~\langle \nabla Q(u),v\rangle _{ L^2\color{black}}=0\},
	\end{equation*}
	\begin{equation*}
		(J_{\mu}|_{\mathcal{S}_a})'(u)=J'_{\mu} (u) -\frac{J'_{\mu} (u)\nabla Q(u)}{||\nabla Q(u)||_{2}}Q'(u),
	\end{equation*}
	where $\nabla Q(u)\in H^1(\R^N)$ is the unique element satisfying $\langle \nabla Q(u),v\rangle _{L^2}=     Q'(u)v$ for every $v\in H^1(\R^N)$.
	\vskip0.1in
	Applying Ekeland’s variational principle, there exists a sequence $\{v_n\} \subset \mathcal{S}_a$
	satisfying
	\begin{equation}\label{eq2.16}
    \begin{aligned}
    &J_\mu(v_n)\le J_\mu(u_n),\\
		&||u_n-v_n||_{H^1}\leq \sqrt{\epsilon_n},\\
        &J_{\mu}(v_n)\leq  J_{\mu}(w)+\sqrt{\epsilon_n}||v_n-w||_{H^1}\quad\text{for\quad each\quad }w\in\mathcal{S}_a,
        \end{aligned}
	\end{equation}
	where $\epsilon_n:=  J_{\mu}(u_n) - m_a \geq  0$. By \eqref{eq2.16} %and the fact $v_n \in \mathcal{S}_a$ 
    we deduce that
	$\{v_n\}$ is also a minimizing sequence and 
	\begin{equation*}
		|| (J_{\mu}|_{\mathcal{S}_a})' (v_n)||_{(T_{u_n} \mathcal{S}_a)^*}:=\sup \{  | (J_{\mu}|_{\mathcal{S}_a})'(v_n)\psi|:~||\psi||_{H^1}=1,\psi\in T_{v_n} \mathcal{S}_a\}\rightarrow0.
	\end{equation*}
	Since $\{v_n\}$ is bounded in $H^1(\R^N)$, $J'_\mu$
	maps bounded sets into bounded sets and $\nabla Q(v_n)=v_n$, %$||\nabla Q(v_n)||_{2}\geq \frac{2a}{||v_n||_{H^1}}$
	setting, for any $n \geq 1$, %since $Q' (v_n)v_n = 2Q(v_n) = 2a$. 
	\begin{equation*}
		\lambda_n:=-\frac{J'_{\mu}(v_n)\nabla Q(v_n)}{||\nabla Q(v_n)||_{2}^2}=-\frac{J'_{\mu}(v_n)v_n}{a^2},
	\end{equation*}
	%which yields
    we can see that $\lambda_n$ is bounded and
	\begin{equation*}
		J'_{\mu}(v_n)+\lambda_nQ'(v_n)\rightarrow0\quad\text{strongly\quad in}\quad H^{-1}(\R^N).
	\end{equation*}
	%If $\{u_n\}$ is real-valued, then we restrict ourselves into $H|_{\mathbb{R}} := \{u \in H^1(\R^N) | ~u~\text{is~real valued}\}$ and $\mathcal{S}_a|_{\mathbb{R}}:= \mathcal{S}_a\cap H|_{\mathbb{R}}$. Since $m_a=\inf\limits_{u\in \mathcal{S}_a|_{\mathbb{R}}}J_R(u)$ holds, we may use the above
	%argument on  $\mathcal{S}_a|_{\mathbb{R}}$ to obtain real-valued functions $\{v_n\}$ satisfying \eqref{eq2.16}. The proof is finished.
\end{proof}

\color{black}
It is noteworthy to mention that different from the autonomous case or involving periodic potential, the ``vanishing" can be excluded by invariant of variable. However, it is invalid in our case and we have  to explore   new techniques.  Besides,  the  ``dichotomy" is also more complicated due to the limit equation.  For this purpose, we   analyze the properties of the minimizing energy \( m (a) \) as below. 
\vskip0.1in 
Consider the associated  \textit{limit equation}.
\begin{equation} \label{q_infty}
	\begin{cases}
		-\Delta u +\lambda u=f(u)\quad\text{in}\quad\mathbb{R}^{N},\tag{$Q_{\infty}$}\\
		\dis	u>0,\quad \int_{\mathbb{R}^{N}}u^2dx=a^2 
	\end{cases}
\end{equation}
and associated functional is given by
\begin{equation*}
	J_{0}(u)=\frac{1}{2}\int_{\mathbb{R}^N}|\nabla u|^2dx  
	- \int_{\mathbb{R}^N}F(u)dx
\end{equation*}
on $\mathcal{S}_a$. 
Define
\begin{equation} 
	m^{0}(a):=\inf_{u\in\mathcal{S}_a}J_{0}(u).
\end{equation}

			\begin{lemma}\label{lem2.3}
				Assume that $N\geq3,(f_1)-(f_3)$  hold, $\mu\in(0,\bar{\mu})$ and    $h\in L^\infty(\R^N)$ with $\|h\|_\infty\le 1$. Then for any   $a>0$, it holds 
                \vskip0.015in
               \text{(i)} $ m^\mu(a)$ is continuous  in $a$.
                \vskip0.015in
               \color{black}\text{(ii)} for any $ 0<a_1<a_2$, it holds
				\begin{center}
					$ \frac{a_1^2}{a_2^2}m^\mu(a_2)<m^\mu(a_1)<0.$
				\end{center}
                Particularly, $m^{\mu}(a)$ is  decreasing in $a>0$.
              
                 \text{(iii)} for any $0\leq b,c_j<a$ $(j=1,\cdots,k)$ satisfying $a^2=b^2+\sum_{j=1}^{k}c_j^2,$ it holds 
                \begin{equation*}
                	m^\mu(a)\leq m^\mu(b)+\sum_{j=1}^{k} m^{0} (c_j).
                \end{equation*}
			\end{lemma}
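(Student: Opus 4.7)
The plan is to treat the three items in sequence, using the homothety $u \mapsto \sigma u$ for (i) and (ii), and a standard ``widely separated translates'' construction for (iii). For (i), given $a_n \to a$, I take near-minimizers $u_n \in \mathcal{S}_{a_n}$ with $J_\mu(u_n) \le m^\mu(a_n) + 1/n$; by the coercivity in Lemma~\ref{lem2.2}, $(u_n)$ is bounded in $H^1(\R^N)$. The rescaled functions $v_n := (a/a_n)u_n \in \mathcal{S}_a$ satisfy $\|v_n - u_n\|_{H^1} \to 0$, and using the growth bound $|f(t)|\le C_1|t|^{q-1}+C_2|t|^{p-1}$ from $(f_1),(f_3)$ together with Sobolev and Hardy inequalities, $J_\mu$ is uniformly continuous on bounded subsets of $H^1(\R^N)$, so $J_\mu(v_n) - J_\mu(u_n) \to 0$ and hence $m^\mu(a) \le \liminf m^\mu(a_n)$. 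The reverse inequality is symmetric: rescale a near-minimizer of $m^\mu(a)$ into $\mathcal{S}_{a_n}$.

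For (ii), the key observation is that for $u \in \mathcal{S}_{a_1}$ and $\theta := a_2/a_1 > 1$, the kinetic and Hardy-type parts of $J_\mu$ are $2$-homogeneous in $u$, so
\[
J_\mu(\theta u) = \theta^2 J_\mu(u) - \int_{\R^N}\bigl[F(\theta u) - \theta^2 F(u)\bigr]\, dx.
\]
Assumption $(f_2)$ yields, via the elementary inequality $rF(t) < tf(t)$ for $t > 0$, that $F(t)/t^r$ is strictly increasing on $(0,\infty)$. Together with the oddness of $f$, this gives the pointwise bound $F(\theta t) \ge \theta^r F(t)$ for every $t \in \R$, hence
\[
\int_{\R^N}\bigl[F(\theta u_n) - \theta^2 F(u_n)\bigr]\, dx \ge (\theta^r - \theta^2)\int_{\R^N} F(u_n)\, dx
\]
for any minimizing sequence $(u_n) \subset \mathcal{S}_{a_1}$ for $m^\mu(a_1)$. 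Since $\|h\|_\infty \le 1$ and $\mu < \bar\mu$, the Hardy inequality implies $J_\mu(u_n) \ge -\int F(u_n)$, so $\int F(u_n) \ge -J_\mu(u_n) \to -m^\mu(a_1) > 0$. Passing to the limit,
\[
m^\mu(a_2) \le \limsup_n J_\mu(\theta u_n) \le \theta^r m^\mu(a_1) < \theta^2 m^\mu(a_1) = (a_2/a_1)^2 m^\mu(a_1),
\]
which is equivalent to $(a_1/a_2)^2 m^\mu(a_2) < m^\mu(a_1)$; strict monotonicity follows since $\theta^2 m^\mu(a_1) < m^\mu(a_1) < 0$.

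For (iii), fix $\varepsilon > 0$ and pick $u \in \mathcal{S}_b \cap C_c^\infty(\R^N)$ with $J_\mu(u) < m^\mu(b) + \varepsilon$ and $v_j \in \mathcal{S}_{c_j} \cap C_c^\infty(\R^N)$ with $J_0(v_j) < m^0(c_j) + \varepsilon$ (with the convention $m^\mu(0) = m^0(0) = 0$ and the corresponding function identically zero, whenever $b = 0$ or some $c_j = 0$); these exist by density and the continuity of $J_\mu, J_0$ on $H^1(\R^N)$. Choose $y_1,\dots,y_k \in \R^N$ with $|y_j|$ and $|y_i - y_j|$ large enough that $u$ and the translates $v_j(\cdot-y_j)$ have pairwise disjoint supports. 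Setting $w := u + \sum_{j=1}^k v_j(\cdot - y_j) \in \mathcal{S}_a$, the disjoint-support property gives
\[
J_\mu(w) = J_\mu(u) + \sum_{j=1}^k J_0(v_j) - \frac{\mu}{2}\sum_{j=1}^k \int_{\R^N}\frac{h(x+y_j)}{|x+y_j|^2} v_j(x)^2\, dx,
\]
and each cross term vanishes as $|y_j|\to\infty$ because $v_j$ has compact support and $|x+y_j|^{-2} \to 0$ uniformly on that support. Letting $|y_j| \to \infty$ first and then $\varepsilon \to 0$ concludes (iii).

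The main obstacle is the strict inequality in (ii): the weak bound $m^\mu(a_2) \le \theta^2 m^\mu(a_1)$ is an immediate consequence of the scaling identity, but upgrading it to a strict one requires the quantitative lower bound $\liminf_n \int F(u_n) \ge -m^\mu(a_1) > 0$, which in turn relies on both the Hardy-type coercivity from Lemma~\ref{lem2.2} and the fact that $m^\mu(a_1) < 0$.
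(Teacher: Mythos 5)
Your proposal is correct and follows essentially the same route as the paper in all three parts: rescaling near-minimizers between constraint spheres for the continuity (i), the dilation $u\mapsto \theta u$ combined with $F(\theta t)\ge\theta^r F(t)$ from $(f_2)$ and the Hardy-based lower bound $\int_{\R^N}F(u_n)\,dx\ge -J_\mu(u_n)\to -m^\mu(a_1)>0$ for the strict inequality in (ii), and widely separated compactly supported near-minimizers with the vanishing Hardy cross-terms for the subadditivity (iii). The only cosmetic difference is that in (ii) you package the estimate as the cleaner bound $m^\mu(a_2)\le\theta^r m^\mu(a_1)$, whereas the paper keeps the term $(t^2-t^r)\int F(u_n)\,dx$ explicit; both yield the same strict conclusion.
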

			\begin{proof}
            (i) For any $a> 0$, let $a_n > 0$ and $a_n \rightarrow a$. For every $n\in \mathbb{N}$, let $\{u_n\}\subset \mathcal{S}_{a_n}$ such that $ J_{\mu}(u_n)\leq m^\mu(a_n)+\frac{1}{n} \leq \frac{1}{n}.$
Then Lemma \ref{lem2.2} implies that $\{u_n\}$ is bounded in $H^1(\R^N)$, moreover, we have as $n\rightarrow\infty$,
\begin{equation*}
    m^\mu(a)\leq  J_{\mu}(\frac{a}{a_n}u_n)= J_{\mu}(u_n)+o(1)\leq m^\mu(a_n)+o(1).
\end{equation*}
On the other hand, given a minimizing sequence $\{v_n\}\subset \mathcal{S}_a$ for $ J_{\mu}$, we have
\begin{equation*}
     m^\mu(a_n)\leq  J_{\mu}(\frac{a_n}{a}v_n)= J_{\mu}(v_n)+o(1)=  m^\mu(a)+o(1),
\end{equation*}
which jointly with above inequality, gives that $\lim\limits_{n\rightarrow\infty}  m^\mu(a_n)=  m^\mu(a)$.
            \vskip0.1in 
				(ii) Let $t>1$ such that $a_2=ta_1$. It is sufficient to prove
				\begin{equation*}
				m^\mu(a_2)< t^2m^\mu(a_1).
				\end{equation*}
				Note that $ J_{\mu}(u)= J_{\mu}(|u|)$ for any $u\in H^1(\R^N)$. Let $\{u_n\}\subset \mathcal{S}_{a_1}$ be a nonnegative minimizing sequence with respect
				to the $m^\mu(a_1)$, that is, as $n\rightarrow+\infty$,
				\begin{equation*}
					 J_{\mu}(u_n)\rightarrow m^\mu(a_1).
				\end{equation*}
				Moreover, we know $\{u_n\}$ is bounded in $H^1(\R^N)$  uniformly in $a $. Using the Hardy inequality, 
				\begin{equation*}
					0>m^\mu(a_1)+o(1)=J_{\mu}(u_n)\geq 	\frac{\bar{\mu}-\mu}{2\bar{\mu}}\int_{\mathbb{R}^N}|\nabla u_n|^2dx -\int_{\mathbb{R}^N} F(u_n)dx\geq -\int_{\mathbb{R}^N} F(u_n)dx,
				\end{equation*}
				which implies $\int_{\mathbb{R}^N} F(u_n)dx\geq C>0$ for $n\geq n_0$ for some $n_0>0$ large. 
				Setting $v_n = tu_n$, obviously $v_n\in\mathcal{S}_{a_2}$. By $(f_2)$, 
			 $	\frac{F(s)}{s^r}$ is nondecreasing on $(0,+\infty)$, we obtain the inequality
	 \begin{equation*}
	 	 F(ts)\geq t^rF(s),\quad\forall s\geq0,t\geq1.
	 \end{equation*}
		Therefore, above results lead to that for $n\rightarrow \infty$,
		\begin{equation*}
			\begin{aligned}
			m^\mu(a_2)\leq J_{\mu}(v_n)&=t^2J_{\mu}(u_n)+t^2\int_{\R^N}F(u_n)dx-\int_{\R^N}F(tu_n)dx\\&\leq t^2J_{\mu
}(u_n)+(t^2-t^r)\int_{\R^N}F(u_n)dx< t^2J_{\mu}(u_n)=t^2m^\mu(a_1)-C,
			\end{aligned}
		\end{equation*}
				for $n\ge n_0$, which proves (ii). Particularly, by (ii), we have
                \begin{equation*}
                    m^\mu (a_2)<\frac{a_2^2}{a_1^2}m^\mu (a_1)<m^\mu (a_1)<0,
                \end{equation*}
                which means $m^\mu (a)$ is decreasing in $a>0$.

                \vskip 0.1in

                (iii)
                For any $\epsilon > 0$ and $k\geq1$, we can find $\phi_{\epsilon}, \psi^j_{\epsilon} \in C_0^{\infty}(\R^N)$  $(j=1,\cdots,k)$ such that
                \begin{equation*}
                	\phi_{\epsilon}\in \mathcal{S}_b,  \quad \psi^j_{\epsilon}\in \mathcal{S}_{c_j},
                \end{equation*}
                \begin{equation*}
                	 J_{\mu}(  \phi_{\epsilon})\leq m^\mu (b)+\frac{\epsilon}{k+1},\quad  J_{ \infty}(  \psi^j_{\epsilon})\leq m^{0}(c_j)+\frac{\epsilon}{k+1}.
                \end{equation*}
                Let $u_{\epsilon,n}(x):=\phi_{\epsilon}(x)+\sum_{j=1}^{k}\psi^j_{\epsilon}(x-y_n^j)$ with $|y_n^j|\rightarrow+\infty,|y_n^i-y_n^j|\rightarrow+\infty$ for any $i\neq j$ as $n\rightarrow\infty$. Since $\phi_{\epsilon}, \psi^j_{\epsilon}$ have compact support and there exists $n_0>0$ such that
                $${\rm supp} \,\psi^i_{\epsilon}(\cdotp-y^j_n)\cap {\rm supp}\, \psi^i_{\epsilon}(\cdotp-y^i_n)=\emptyset\qquad\forall\, 1\le i\ne j\le k,n\ge n_0.$$
                 Thus for $n
\ge n_0$ one has 
                \begin{equation}\label{e2.17}
                	u_{\epsilon,n}\in \mathcal{S}_{a},\quad m^\mu (a)\leq  J_{\mu}(u_{\epsilon,n})=  J_{\mu}(  \phi_{\epsilon}(x))+\sum_{j=1}^{k}J_{\mu}(  \psi^j_{\epsilon}(x-y_n^j)).
                \end{equation}
                Thus we have as $n\rightarrow \infty$,
                \begin{equation}\label{e2.18}
                	 J_{\mu}(  \psi^j_{\epsilon}(x-y_n^j))\rightarrow  J_{\infty}(  \psi^j_{\epsilon}(x)).
                \end{equation}
                From \eqref{e2.17}-\eqref{e2.18}, it follows that
                \begin{equation*}
                	\begin{aligned}
                		m^\mu (a)&\leq\lim_{n\rightarrow\infty}  \Big( J_{\mu}(  \phi_{\epsilon}(x))+\sum_{j=1}^{k} J_{\mu}(  \psi^j_{\epsilon}(x-y_n^j))\Big)\\&= J_{\mu}(  \phi_{\epsilon}(x))+\sum_{j=1}^{k}J_{\infty }(  \psi^j_{\epsilon}(x))\\&\leq m^\mu(b)+\sum_{j=1}^{k}m^{0}(c_j)+\epsilon.
                	\end{aligned}
                \end{equation*}
                By the arbitraries of $\epsilon$, the result holds. 
			\end{proof}

       Set the following notations:
        $$J_{ \mu,\lambda}(u):= J_{\mu}(u) +\frac{\lambda}{2}\int_{\R^N}u^2 dx,\quad J_{0,\lambda}(u):=J_{0}(u)+\frac{\lambda}{2}\int_{\R^N}u^2 dx,\quad\forall~u\in H^1(\R^N).$$
        
      To show the behavior of the minimizing sequence, we prove the following splitting lemma.
        \vskip0.05in
       
        \begin{lemma}[Splitting Lemma]\label{splitting}
        Assume that $N\geq3,(f_1)-(f_3)$   hold,    $\mu\in[0,\bar{\mu})$  and    $h\in L^\infty(\R^N)$ with $\|h\|_\infty\le 1$. For fixed   $\lambda>0$, let $\{u_n\}\subset H^1(\mathbb{R}^N)$ be a bounded $(PS)_c$ sequence for $J_{\mu,\lambda}$   such that $$  u_n\rightharpoonup u  \quad \text{in}\quad    H^1(\mathbb{R}^N).$$ Then either $u_n\to u$ strongly in $H^1(\R^N)$ or there exists a number $k\in\mathbb{N}$, $k$ non-trivial solutions $w^1,\cdots,w^k\in H^1(\mathbb{R}^N)$ to the limit equation (\ref{q_infty}) 
        %	\begin{equation}\label{e2.19}
        %		-\Delta u+\lambda u+(|x|^{-1}\ast |u|^2) u=f_R(u)\quad\text{in}\quad\R^N
        %	\end{equation}
        	and $k$ sequences $\{y_n^j\}_n\subset \mathbb{R}^N,1\leq j\leq k$ such that $|y_n^j|\rightarrow\infty,|y_n^i-y_n^j|\rightarrow +\infty$ for $i\neq j$ and
        	\begin{equation}\label{e2.20}
        		u_n=u+\sum_{j=1}^kw^j(\cdot-y_n^j )+o(1)~\text{strongly~in}~H^1(\mathbb{R}^N).
        	\end{equation}
        	Furthermore, one has
        	\begin{equation}\label{e2.21}
        		\|u_n\|_2^2=\|u\|_2^2+\sum_{j=1}^k\|w^j\|_2^2+o(1)
        	\end{equation}
        	and
        	\begin{equation}\label{e2.22} 
        		J_{\mu,\lambda}(u_n)= J_{\mu,\lambda}(u)+\sum_{j=1}^k J_{0,\lambda}(w^j)+o(1)
        	\end{equation}
        \end{lemma}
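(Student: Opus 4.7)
I would follow the classical iterative "bubble extraction" scheme for profile decompositions, adapted to account for the singular Hardy-type term $-\frac{\mu}{|x|^2}h(x)$. The guiding idea is: every non-compact piece of $\{u_n\}$ must escape to spatial infinity, and once it does, the Hardy term becomes negligible so the bubble satisfies the limit equation \eqref{q_infty}.

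\textbf{Step 1: extracting the first bubble.} Set $v_n^1:=u_n-u$, so $v_n^1\rightharpoonup 0$ in $H^1(\R^N)$. Standard Brezis--Lieb type arguments, together with the compactness of $u\mapsto \frac{h(x)}{|x|^2}u^2$ on bounded sets (using $h\in L^\infty$ and the Hardy inequality for the cutoff at infinity), give
\begin{equation*}
J_{\mu,\lambda}(u_n)=J_{\mu,\lambda}(u)+J_{0,\lambda}(v_n^1)+o(1),\qquad \|u_n\|_2^2=\|u\|_2^2+\|v_n^1\|_2^2+o(1),
\end{equation*}
and $J_{0,\lambda}'(v_n^1)\to 0$ in $H^{-1}$. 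If $\|v_n^1\|_{H^1}\to 0$ we are done. Otherwise, I claim $\limsup_n\sup_{z\in\R^N}\int_{B_1(z)}|v_n^1|^2\,dx>0$: if not, Lemma \ref{lemma1} would force $v_n^1\to 0$ in $L^r(\R^N)$ for $r\in(2,2^*)$, which combined with growth condition $(f_3)$ and $(f_1)$ gives $\int F(v_n^1),\int f(v_n^1)v_n^1\to 0$; then testing $J_{0,\lambda}'(v_n^1)\to 0$ against $v_n^1$ yields $\|v_n^1\|_{H^1}\to 0$, a contradiction. Hence there is $\{y_n^1\}\subset\R^N$ with $\int_{B_1(y_n^1)}|v_n^1|^2\,dx\ge\delta>0$, and (because $v_n^1\rightharpoonup 0$ in $H^1$) necessarily $|y_n^1|\to\infty$. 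Passing to a subsequence, $v_n^1(\cdot+y_n^1)\rightharpoonup w^1\ne 0$ in $H^1(\R^N)$.

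\textbf{Step 2: $w^1$ solves the limit equation.} For any $\varphi\in C_c^\infty(\R^N)$, since $|y_n^1|\to\infty$ and $h\in L^\infty$, one checks that $\int_{\R^N}\frac{h(x+y_n^1)}{|x+y_n^1|^2}v_n^1(\cdot+y_n^1)\varphi\,dx\to 0$, so $J_{0,\lambda}'(v_n^1(\cdot+y_n^1))\to 0$ in $H^{-1}_{\mathrm{loc}}$ by translation-invariance of the remaining part. Taking weak limits and using the a.e.\ convergence of $v_n^1(\cdot+y_n^1)$ to pass to the limit in the nonlinear term, $w^1$ satisfies $-\Delta w^1+\lambda w^1=f(w^1)$.

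\textbf{Step 3: iteration and termination.} Define $v_n^2(x):=v_n^1(x)-w^1(x-y_n^1)$. Standard Brezis--Lieb-type decompositions (for the $H^1$ norm, the mass, and $\int F$) give $v_n^2\rightharpoonup 0$ in $H^1$, $J_{0,\lambda}'(v_n^2)\to 0$ in $H^{-1}$, and the analogous energy/mass splittings. Iterate. The procedure terminates after finitely many steps because any non-trivial solution $w\in H^1$ of $-\Delta w+\lambda w=f(w)$ satisfies a uniform lower bound $\|w\|_{H^1}^2\ge\eta>0$: testing the equation against $w$ and using $(f_1),(f_3)$ combined with Gagliardo--Nirenberg gives $\|w\|_{H^1}^2\le C_1\|w\|_{H^1}^q+C_2\|w\|_{H^1}^p$ with $q,p>2$, forcing such a lower bound. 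Since $\sum_j\|w^j\|_{H^1}^2\le\limsup\|v_n^1\|_{H^1}^2<\infty$, only finitely many bubbles can be extracted, after which $v_n^{k+1}\to 0$ strongly, yielding \eqref{e2.20}. The mass and energy identities \eqref{e2.21}--\eqref{e2.22} then follow by summing the one-step splittings.

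\textbf{Main obstacle.} The Hardy potential term $-\frac{\mu}{2}\int\frac{h(x)}{|x|^2}u_n^2$ is not weakly continuous a priori because of the singularity at the origin. The crucial point is to show (using Hardy's inequality on $\R^N\setminus B_R$ together with $h\in L^\infty$, and local Rellich compactness on $B_R$) that $\int\frac{h(x)}{|x|^2}(u_n^2-u^2)\,dx\to 0$, and that after translating by $y_n^1$ with $|y_n^1|\to\infty$, the Hardy weight centered at $-y_n^1$ becomes uniformly small on any compact set, so that the limit bubbles solve the Hardy-free limit equation \eqref{q_infty}. Handling these two facts carefully is what distinguishes the present splitting lemma from the autonomous case.
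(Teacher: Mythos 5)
Your overall strategy coincides with the paper's: iterative bubble extraction via Lions' lemma (Lemma \ref{lemma1}), translation of the profiles to infinity which kills the Hardy term so that each bubble solves the free limit equation, and termination via the uniform lower bound $\|w\|_{H^1}\ge \hat c$ for non-trivial solutions. However, there is a genuine gap in your Step 1 and in your ``main obstacle'' paragraph. You assert that $\int_{\R^N}\frac{h(x)}{|x|^2}(u_n^2-u^2)\,dx\to 0$, equivalently (after the cross term is removed) that $\int_{\R^N}\frac{h(x)}{|x|^2}|v_n^1|^2\,dx\to 0$, and you propose to prove it by ``Hardy on $\R^N\setminus B_R$ plus Rellich on $B_R$''. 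This does not work at the origin: $|x|^{-2}\notin L^{N/2}_{loc}$, so no H\"older/Rellich argument controls $\int_{B_\varepsilon}\frac{|v_n^1|^2}{|x|^2}dx$, and indeed the statement is false for general weakly null bounded sequences (take $v_n(x)=n^{(N-2)/2}\phi(nx)$: then $v_n\rightharpoonup 0$ in $H^1$, $v_n\to 0$ in $L^2$, yet $\int\frac{|v_n|^2}{|x|^2}dx=\int\frac{|\phi|^2}{|y|^2}dy$ is a nonzero constant). Consequently your first-step identities $J_{\mu,\lambda}(u_n)=J_{\mu,\lambda}(u)+J_{0,\lambda}(v_n^1)+o(1)$ and $J_{0,\lambda}'(v_n^1)\to 0$ in $H^{-1}$ are not justified as written: the Hardy part of the remainder cannot be discarded at this stage without an additional argument excluding concentration at the origin.

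The fix — which is what the paper does — is to never claim that the remainder's Hardy energy vanishes during the iteration. One only needs the Brezis--Lieb type identity $\int\frac{h|u_n-u|^2}{|x|^2}dx=\int\frac{h|u_n|^2}{|x|^2}dx-\int\frac{h|u|^2}{|x|^2}dx+o(1)$, which follows from the vanishing of the cross term $\int\frac{h(u_n-u)u}{|x|^2}dx$ (the map $v\mapsto\int\frac{hvu}{|x|^2}dx$ is a bounded linear functional on $H^1$ by Cauchy--Schwarz and Hardy, so weak convergence suffices). The remainders are then measured with $J_{\mu,\lambda}$, not $J_{0,\lambda}$; the Hardy contribution is discarded only (i) for the translated profiles, where $\int\frac{h|w^j(\cdot-y_n^j)|^2}{|x|^2}dx\to 0$ is proved by approximating $w^j$ with compactly supported functions and using $|y_n^j|\to\infty$, and (ii) at the final step, where the last remainder converges strongly to $0$ in $H^1$ so its Hardy term is controlled by $\frac{1}{\bar\mu}\|\nabla\psi_n^{k+1}\|_2^2\to 0$. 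Likewise, in your vanishing/termination argument you should test $J'_{\mu,\lambda}$ (not $J'_{0,\lambda}$) against the remainder and use the coercivity $\|\nabla v\|_2^2-\mu\int\frac{h v^2}{|x|^2}dx\ge\frac{\bar\mu-\mu}{\bar\mu}\|\nabla v\|_2^2$, which makes the stronger weak-continuity claim unnecessary. With these modifications your argument matches the paper's proof.
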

        The proof of the Splitting Lemma \ref{splitting} follows the outlines of the one of Lemma $3.1$ of \cite{LLT=JDE}, the main difference being the presence of a non-constant function $h\in L^\infty(\R^N)$ with $||h||_\infty\le 1$.
        \begin{proof}
        	Since $u_n \rightharpoonup u$  in $H^1(\mathbb{R}^N)$, thus for any $\psi\in C_0^{\infty}(\R^N)$,  
        	\begin{equation*}
        		0=\left\langle  J_{\mu,\lambda}(u_n) ,\psi \right\rangle+o(1)=\left\langle  J_{\mu,\lambda}(u) ,\psi\right\rangle,
        	\end{equation*}
        	this implies that $u$ is a solution of problem \eqref{q}. 
        	%If $u_n \rightarrow u$  in $H^1(\mathbb{R}^N)$,  we reach the conclusion. Thus in the sequel, we assume that $\{u_n\}$ does not converge strongly to $u$ in $H^1(\mathbb{R}^N)$.
        	 
        	\vskip 0.1in
        	
         	Setting $\psi^1_n:=u_n-u$, we have $\psi^1_n\rightharpoonup 0$ weakly in $H^1(\R^N)$, $\psi^1_n\to 0$ a.e. in $\R^N$ and strongly in $L^p_{loc}(\R^N)$, hence by direct calculation, we have as $  n\to\infty $,
        	\begin{equation}
        		\label{psi-H1}\|\nabla\psi^1_n\|^2_2=\|\nabla u_n\|^2_2-\|\nabla u\|^2_2+o(1) ,
        	\end{equation}
        	\begin{equation}
        		\label{psi-L2}\|\psi^1_n\|^2_2=\|u_n\|^2_2-\|u\|^2_2+o(1),
        	\end{equation}
        	\begin{equation*}
        		\int_{\R^N}\frac{ h(x)|\psi^1_n|^2}{|x|^2}dx=	\int_{\R^N}\frac{ h(x)|u_n|^2}{|x|^2}dx-	\int_{\R^N}\frac{ h(x)|u|^2}{|x|^2}dx+o(1). 
        	\end{equation*}
        	Recall that $$f(t)\le c_1 t^{q-1}+c_2 t^{p-2},\qquad\forall\,t>0,$$
        for some constants $c_1,\,c_2>0$, due to $(f_1)-(f_3)$, which yields that 
        \begin{equation}
        \label{pt-conv}
        \int_{\R^N} f(u_n-\theta u)udx\to \int_{\R^N} f(u-\theta u)udx\qquad n\to\infty
        \end{equation}
        for any $\theta\in[0,1]$. In fact, due to the H\"{o}lder inequality, for any $\varepsilon>0$ there exists $R=R(\varepsilon)>0$ such that
        \begin{equation}\notag
        \begin{aligned}
        &\int_{\R^N\setminus B_R(0)}|f(u_n-\theta u)u-f(u-\theta u)u|dx\le c\int_{\R^N\setminus B_R(0)}(|u_n-\theta u|^{p-1}+|u_n-\theta u|^{q-1})|u| dx\\
        &\le c((\|u_n\|_q^{q-1}+\|u\|_q^{q-1})\|u\|_{L^q(\R^N\setminus B_R(0))}+(\|u_n\|_p^{p-1}+\|u\|_p^{p-1})\|u\|_{L^p(\R^N\setminus B_R(0))})\le \varepsilon/2, \qquad\forall\, n\in\N.%\,\theta\in[0,1].  
        \end{aligned}
        \end{equation}
        Moreover, using that $u_n\to u$ strongly in $L^r_{loc}(\R^N)$, for $r\in[2,2^*]$, due to the dominated convergence Theorem applied in $B_R(0)$ for any fixed $\theta\in[0,1]$, there exists $n_0=n_0(\varepsilon)>0$ such that
        $$\int_{B_R(0)}|f(u_n-\theta u)u-f(u-\theta u)u|dx\le\varepsilon/2\qquad\forall\, n\ge n_0,$$
        which concludes the proof of \eqref{pt-conv}. As a consequence, using that the integrals $\int_{\R^N} f(u_n-\theta u)ud\theta$ are bounded in $\theta\in[0,1]$, the Fubini Theorem and the dominated convergence Theorem give
        \begin{equation}\label{eq2.25}
        			\begin{aligned}
        	&	\int_{\R^N} F ( u_n)-F (\psi^1_n)dx=-\int_{\R^N}\int_0^1\frac{d}{d\theta}F (u_n-\theta u)d\theta dx =\int_{\R^N}\int_0^1f (u_n-\theta u)ud\theta dx\\&\rightarrow \int_{\R^N}\int_0^1f (u-\theta u)ud\theta dx =-\int_{\R^N}\int_0^1 \frac{d}{d\theta}F(u -\theta u)d\theta dx =	\int_{\R^N} F( u )dx.
        		\end{aligned}
        	\end{equation} 
        Thus above results yield that
        	\begin{equation}
        		\label{rel-nabla-I}
        	J'_{\mu,\lambda}(\psi^1_n)=J'_{\mu,\lambda}(u_n)-J'_{\mu,\lambda}(u)+o(1)\quad\text{in}\quad H^{-1}(\R^N)\quad\text{as}\quad  n\to\infty ,
        	\end{equation}
            %\begin{equation}
        	%	\langle\label{rel-nabla-I}
        	%\nabla J_{\mu,\lambda}(\psi^1_n),\psi^1_n\rangle=\langle\nabla J_{\mu,\lambda}(u_n),\psi^1_n\rangle-\langle\nabla J_{\mu,\lambda}(u),\psi^1_n\rangle+o(1)\quad\text{in}\quad \text{as}\quad  n\to\infty ,
        	%\end{equation}
        		\color{black}\begin{equation}
        		\label{rel-nabla-I}
        	 J_{\mu,\lambda}(\psi^1_n)= J_{\mu,\lambda}(u_n)- J_{\mu,\lambda}(u)+o(1) \quad\text{as}\quad  n\to\infty.
        	\end{equation} 
       % The red line above is enough

   %If $\psi_n^1\rightarrow 0$ in $H^1(\R^N)$, we are done, otherwise, one has
   \color{black}Assume first that
   
   \begin{equation}
   \label{vanishing}
   \lim\limits _{n\rightarrow\infty}\sup\limits_{y\in\mathbb{R}^N}\int_{B_1(y)}|\psi^1_n|^2dx=0.
   \end{equation}
   Then by Lemma \ref{lemma1}, $\psi_1^n\to 0$ in $L^p(\R^N)$. Using that $\langle\nabla J_{\mu,\lambda}(\psi^1_n),\psi^1_n\rangle=o(1)$, we can see that
   $$0\le \frac{\bar{\mu}-\mu}{\bar{\mu}}||\nabla \psi^1_n||^2_2+\lambda||\psi^1_n||^2_2\le||\nabla \psi^1_n||^2_2-\mu\int_{\R^N}\frac{h(x)}{|x|^2}|\psi^1_n|^2(x)dx+\lambda||\psi^1_n||^2_2=\int_{\R^N}f(\psi^1_n)\psi^1_n dx=o(1),$$
   which yields that $\psi^1_n\to 0$ strongly in $H^1(\R^N)$ and we are done.\\ 
   
   Hence let us assume that \eqref{vanishing} does not hold. In this case, there exist $\delta_1>0$ and a sequence  $\{y_n^1\}\subset \mathbb{R}^N$ such that
        \begin{equation} 
         \int_{B_1(y_n^1)}|\psi^1_n|^2dx\geq \delta_1.
        \end{equation}
      We remark that $\{y_n^1\}$ is unbounded. Indeed, if the contrary were true, there would exist $R > 0$ such that 
       \begin{equation*} 
      \int_{B_R(0)}|\psi^1_n|^2dx\geq 	\int_{B_1(y_n^1)}|\psi^1_n|^2dx\geq \delta_1,
      \end{equation*}
      which contradicts the fact that $\psi^1_n\to 0$ in $L^2_{loc}(\R^N)$. Define $w_n^1 := u_n(\cdot+y_n^1)$, one can see
        \begin{equation} 
      	\int_{B_1(0)}|w_n^1|^2dx=	\int_{B_1(y_n^1)}|u_n|^2dx=  \int_{B_1(y_n^1)}(|u |^2+|\psi^1_n|^2)dx+o(1)
        \geq \frac{	\delta_1 }{2}.
      \end{equation}
        	As a consequence, $w^1_n \rightharpoonup w^1	\ne 0$ weakly in $H^1(\R^N)$, $w^1_n\to w^1			 $ a.e. in $\R^N$ and strongly in $L^p_{loc}(\R^N)$ for any $p\in[2,6).$ For any $\varphi\in C_0^{\infty}(\R^N)$ with  supp $\varphi\subset B_R(0)$ for some $R>0$, applying the H\"{o}lder inequality, we have as $n\rightarrow \infty$,
        	\begin{equation*}
        		\begin{aligned}
        		\int_{\R^N}\frac{h(x) u_n \varphi(\cdot-y_n^1)}{|x|^2}dx&\leq \Big( 	\int_{\R^N}\frac{ |u_n|^2}{|x|^2}dx\Big)^{\frac{1}{2}} \Big( 	\int_{\R^N}\frac{ |\varphi(\cdot-y_n^1)|^2}{|x|^2}dx\Big)^{\frac{1}{2}}\\&\leq 
        		C   \Big( 	\int_{\text{supp}\varphi}\frac{ |\varphi(x)|^2}{|x+y_n^1|^2}dx\Big)^{\frac{1}{2}}\leq C\frac{ ||\varphi||_2}{\Big| |y_n^1|-R\Big|}\rightarrow 0
        	\end{aligned}
        	\end{equation*}
        	Therefore, using that $J'_{\mu,\lambda}(u_n)=o(1)$ in $H^{-1}(\R^N)$, we obtain
        	\begin{equation*}
        		\begin{aligned}
        		o(1)&= \langle J_{\mu,\lambda}(u_n),\varphi(\cdot-y_n^1)\rangle \\&=	\int_{\R^N} \nabla u_n\nabla \varphi(\cdot-y_n^1) dx -\mu 	\int_{\R^N}\frac{ h(x)u_n \varphi(\cdot-y_n^1)}{|x|^2}dx+\lambda \int_{\R^N} u_n  \varphi(\cdot-y_n^1) dx -	\int_{\R^N} f(u_n) \varphi(\cdot-y_n^1)dx 
        		\\&=
        			\int_{\R^N} \nabla w^1_n\nabla \varphi(x) dx  +\lambda \int_{\R^N} w_n^1  \varphi(x) dx
        	  -	\int_{\R^N} f(w^1_n) \varphi(x)dx \\&
        	 = \langle J_{\infty,\lambda}(w^1			 ),\varphi\rangle+o(1),
        		\end{aligned}
        	\end{equation*}
        	that is, $w^1$ is a solution to
         \begin{equation}\label{lim-R}
        	-\Delta u +\lambda u =f(u)\quad\text{in}\quad\mathbb{R}^{N}.
        \end{equation}
        	Setting $ \psi^2_n =u_n-u-w^1(\cdotp-y^1_n)$, then
        	\begin{equation}\label{e2.35}
        		|| \psi^2_n||_2^2=||u_n||_2^2-||u ||_2^2-||w^1			 ||_2^2.
        	\end{equation}
        	We claim that
        	\begin{equation}\label{e2.36}
        		\begin{aligned}
        		\int_{\R^N}\frac{h(x) | \psi^2_n|^2 }{|x|^2}dx &=\int_{\R^N}\frac{h(x) | u_n-u|^2 }{|x|^2}dx-\int_{\R^N}\frac{ h(x)| w^1(\cdotp-y^1_n)|^2 }{|x|^2}dx+o(1) \\&
        		=
        		\int_{\R^N}\frac{h(x) | u_n|^2 }{|x|^2}dx-	\int_{\R^N}\frac{h(x) |  u|^2 }{|x|^2}dx+o(1).
        			\end{aligned}
        	\end{equation}
        	In fact one can find a sequence $\{\varphi_m\}\subset C_0^{\infty}(\R^N)$
          such that $\varphi_m\rightarrow w^1$ in $H^1(\R^N)$ as $m\rightarrow \infty$. Let $R_m > 0$
        	satisfy supp $\varphi_m\subset B_{R_m} (0)$. For any $m\in \mathbb{N}$, we find $n = n(m)$ such that $|y_n| -R_m \geq  m$ and $\{n(m)\}$ is
        	an increasing sequence. Then we get as $n\rightarrow \infty$,
        	\begin{equation*}
        		\begin{aligned}
        			 \int_{\R^N}\frac{  h(x)|\varphi_m(\cdot-y_n^1)|^2}{|x|^2}dx&\leq  \int_{B_{R_m} (0)}\frac{ |\varphi_m(x)|^2}{|x+y_n^1|^2}dx \\&\leq \frac{1}{\Big( |y_n^1|-R_m\Big)^2}\int_{B_{R_m} (0)} |\varphi_m(x)|^2 dx\\&\leq\frac{1}{m^2}||
        			 \varphi_m(x)||_2^2\rightarrow 0, 
        		\end{aligned}
        	\end{equation*}
        	which deduces that
        		\begin{equation}\label{eq2.39}
        		\int_{\R^N}\frac{ h(x) |w^1			 (\cdot-y_n^1)|^2}{|x|^2}dx \rightarrow 0, 
        	\end{equation}
        	Since $u_n\rightharpoonup u$ weakly in $H^1(\R^N)$, the claim holds. Observe that  $\psi^2_n(\cdot+y_n^1)\rightharpoonup w^1			 $ a.e. in  $\R^N$,
        then by Vitali's Theorem and \eqref{eq2.25}, we get
        	\begin{equation}\label{e2.37}
        		\begin{aligned}
         	\int_{\R^N} F(\psi^2_n )dx=&	\int_{\R^N} F(\psi^2_n (\cdot+y_n^1))dx=	\int_{\R^N} F(u_n-u)dx-\int_{\R^N} F(w^1			 )dx+o(1)\\&
         	=	\int_{\R^N} F(u_n )dx-	\int_{\R^N} F(u  )dx-\int_{\R^N} F_R(w^1			 )dx+o(1).
         	\end{aligned}
        \end{equation}
        
        	It follows from \eqref{e2.35}-\eqref{e2.37}, one can conclude that
        	\begin{equation*}
        		J_{\mu,\lambda}(\psi^2_n)=	J_{\mu,\lambda}(u_n)-	J_{\mu,\lambda}(u)-J_{0,\lambda}(w^1)+o(1).
        	\end{equation*}
        	 Then we consider 
        	\begin{equation} 
        		\delta_2:= \lim _{n\rightarrow\infty}\sup_{y\in\mathbb{R}^N}\int_{B_1(y)}|\psi^2_n|^2dx.
        	\end{equation}
            
        Iterating the above procedure, for any integer $\ell\geq1$, there exist $\ell$ non-trivial solutions $w^1,\cdots,w^\ell\in H^1(\mathbb{R}^N)$ to \eqref{lim-R} and $\ell$ sequences $\{y_n^j\}_n\subset \mathbb{R}^N$ satisfying $|y_n^j|\rightarrow\infty,|y_n^i-y_n^j|\rightarrow +\infty$ for $i\neq j$, $1\le j\le \ell$, such that, setting $\psi^{\ell+1}_j:= \psi^2_n =u_n-u-\sum_{j=1}^\ell w^j(\cdotp-y^j_n)$, we have
        	\begin{equation}\label{lim-norms}
        		\begin{aligned}
        			\|\nabla\psi^{k+1}_n\|^2_2&=\|\nabla\psi^{k}_n\|^2_2-\|\nabla w^k\|^2_2+o(1)=\|\nabla u_n\|^2_2-\|\nabla u\|^2_2-\sum_{j=1}^{k}\|\nabla w^{j}\|^2_2+o(1),\\
        			\|\psi^{k+1}_n\|^2_2&=\|\psi^{k}_n\|^2_2-\|w^k\|^2_2+o(1)=\|u_n\|^2_2-\|u\|^2_2-\sum_{j=1}^{k}\|w^{j}\|^2_2+o(1),\\
        					\int_{\R^N} F(\psi^{k+1}_n )dx&=\int_{\R^N} F(\psi^{k}_n )dx-\int_{\R^N} F(w^k )dx+o(1)\\&
        					=	\int_{\R^N} F(u_n )dx-	\int_{\R^N} F(u  )dx-\sum_{j=1}^{k}\int_{\R^N} F(w^j)dx+o(1).
        		\end{aligned}
        	\end{equation}
            as $n\to\infty$. As a consequence, the following results hold:
         \begin{itemize}
         	\item $J'_{\mu,\lambda}(u)=0$ and $J'_{0,\lambda}(w^j)=0$, where $w^j\neq0$ for $j=1,\cdots,k$,
         	\item  $\psi_n^{k+1}=u_n-u-\sum_{j=1}^{k}w^j(\cdot-y_n^j)$,
         	\item $\|\psi^{k+1}_n\|^2_2=\|u_n\|^2_2-\|u\|^2_2-\sum_{j=1}^{k}\|w^{j}\|^2_2+o(1),$
         	\item  $J_{\mu,\lambda}(u_n)=J_{\mu,\lambda}(u)+J_{\mu,\lambda}(\psi_n^{k+1})+\sum_{j=1}^kJ_{0,\lambda}(w^j)+o(1)$.
         \end{itemize}
    
         \vskip0.05in
         	To conclude the lemma, we need to prove 
       that the iterations must stop after finite steps. Since $w^j$ is a non-trivial solution to \eqref{lim-R},
        using \eqref{f'}, it holds
       \begin{equation} \label{q2.44}
       	 \int_{\R^N}|\nabla w^j|^2dx+\lambda \int_{\R^N}|  w^j|^2dx =\int_{\R^N}f(w^j)w^jdx\leq C_1 \int_{\R^N} |w^j|^qdx + C_2\int_{\R^N} |w^j|^pdx.
       \end{equation}
      %by choosing $\alpha\in (0,\lambda)$,
      Combining with Sobolev inequality
      \begin{equation*}
       	 || w||_p\leq c || w||_{H^1}\qquad\forall w\in H^{1}(\R^N),
       \end{equation*}
      one gets that 
       \begin{equation*}
       || w^j||_{H^1}^2\le C_3|| w^j||_{H^1}^q+C_4|| w^j||_{H^1}^p
       \end{equation*}
     for some constants $C_3,\,C_4>0$. Since $p,\,q>2$, we deduce that there is a constant $\hat{c}>0$ independent of $w^j$ such that $||w^j||_{H^1}\geq \hat{c} $. Since $\{u_n\}$ is
    bounded in $H^1(\R^N )$, the iterations must stop after finite steps, namely there exists $k\ge 1$ such that $\psi_n^{j+1}=0 $ for any $j\ge k+1$, which concludes the proof. 
    
        \end{proof}

		\color{black}	\begin{lemma} \label{lem2.6}
					Assume that  $N\geq3,(f_1)-(f_3)$  hold, $\mu\in(0,\bar{\mu})$  and    $h\in L^\infty(\R^N)$ with $\|h\|_\infty\le 1$. Let $\{u_n\}\subset \mathcal{S}_a$ be a minimizing sequence with respect to
				$m^\mu(a)$. Then, up to a subsequence, there exist  $a_0>0$ and  $u\in \mathcal{S}_a $  such that for any $a>a_0$, $u_n\rightarrow u$ in $H^1(\R^N)$ and $(u,\lambda)$ is a solution of \eqref{q}, where  $\lambda >0$ is the associated Lagrange multiplier. Particularly, $u(x)>0$ for any $x\in\R^N$.
			\end{lemma}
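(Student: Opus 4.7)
The plan is to combine Ekeland's variational principle from Lemma \ref{lemma2.4} with the Splitting Lemma \ref{splitting}, and then use the strict sub-additivity of $m^\mu$ from Lemma \ref{lem2.3} to exclude every dichotomy scenario once $a>a_0$.

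First I would start from a nonnegative minimizing sequence $\{u_n\}\subset\mathcal{S}_a$ via Lemma \ref{lemma2.4}(1) and, by Lemma \ref{lemma2.4}(2), obtain a bounded sequence of Lagrange multipliers $\{\lambda_n\}\subset\R$ with $J_\mu'(u_n)+\lambda_n Q'(u_n)\to 0$ in $H^{-1}(\R^N)$. Coercivity of $J_\mu$ on $\mathcal{S}_a$ (Lemma \ref{lem2.2}) makes $\{u_n\}$ bounded in $H^1(\R^N)$, so up to a subsequence $u_n\rightharpoonup u\geq 0$ and $\lambda_n\to\lambda$, and passing to the limit in the PS identity shows that $u$ is a weak solution of $-\Delta u-\mu h(x)u/|x|^2+\lambda u=f(u)$. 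Testing the PS identity with $u_n$ itself and using $J_\mu(u_n)\to m^\mu(a)<0$, I rewrite
\[
\lambda_n a^2=\int_{\R^N}f(u_n)u_n\,dx-2\int_{\R^N}F(u_n)\,dx-2m^\mu(a)+o(1).
\]
By $(f_2)$ one has $t f(t)\geq r F(t)$ for $t\geq 0$, and since $F\geq 0$ on $[0,\infty)$ and $r>2$, this gives $\int f(u_n)u_n\geq 2\int F(u_n)$, hence $\lambda_n a^2\geq -2m^\mu(a)+o(1)$, so $\lambda\geq -2m^\mu(a)/a^2>0$.

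With $\lambda>0$ in hand, I would apply the Splitting Lemma \ref{splitting} to the bounded $(PS)_c$-sequence $\{u_n\}$ for $J_{\mu,\lambda}$: either $u_n\to u$ strongly in $H^1(\R^N)$ (in which case $u\in\mathcal{S}_a$ and $J_\mu(u)=m^\mu(a)$, and we are done), or there exist $k\geq 1$ non-trivial bubbles $w^1,\dots,w^k$ solving the limit equation \eqref{q_infty} with masses $c_j:=\|w^j\|_2>0$, and, setting $b:=\|u\|_2<a$, the splitting relations give $a^2=b^2+\sum c_j^2$ together with
\[
m^\mu(a)=J_\mu(u)+\sum_{j=1}^k J_0(w^j)\geq m^\mu(b)+\sum_{j=1}^k m^0(c_j).
\]
The decisive ingredient is the strict inequality $m^\mu(a)<m^0(a)$, valid for $a>a_0$: by Theorem \ref{th-limit-pb} there is a positive minimizer $v_a$ of $J_0$ on $\mathcal{S}_a$, and since $h\geq 0$, $h\not\equiv 0$, we have $\int_{\R^N}h v_a^2/|x|^2\,dx>0$, so $m^\mu(a)\leq J_\mu(v_a)=m^0(a)-\tfrac{\mu}{2}\int_{\R^N}h v_a^2/|x|^2\,dx<m^0(a)$. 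When $b>0$, combining this with Lemma \ref{lem2.3}(ii) (applied to both $m^\mu$ and $m^0$) yields
\[
m^\mu(b)+\sum_{j=1}^k m^0(c_j)>\tfrac{b^2}{a^2}m^\mu(a)+\tfrac{a^2-b^2}{a^2}m^0(a)=m^0(a)+\tfrac{b^2}{a^2}\bigl(m^\mu(a)-m^0(a)\bigr)>m^\mu(a),
\]
since $m^\mu(a)-m^0(a)<0$ and $b^2/a^2<1$, contradicting the lower bound. When $b=0$, either $k=1$ with $c_1=a$ (giving the direct contradiction $m^\mu(a)\geq m^0(a)$) or $k\geq 2$ with $c_j<a$ (in which case Lemma \ref{lem2.3}(ii) gives $\sum m^0(c_j)>m^0(a)>m^\mu(a)$). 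Thus no bubbles can occur, strong convergence $u_n\to u$ in $H^1(\R^N)$ holds, and $u\in\mathcal{S}_a$ attains $m^\mu(a)$.

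Finally, $u\geq 0$ is nonzero (since $\|u\|_2=a>0$) and satisfies $-\Delta u+\lambda u=f(u)+\mu h(x) u/|x|^2\geq 0$ with $\lambda>0$ and a nonnegative right-hand side, so the strong maximum principle yields $u>0$ in $\R^N$. The main obstacle in this plan is establishing the strict inequality $m^\mu(a)<m^0(a)$: this is exactly why the threshold $a>a_0$ is needed, as it ensures the existence of Shibata's minimizer and thereby a strict competitor to $m^0(a)$.
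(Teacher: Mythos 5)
Your proposal is correct and follows essentially the same route as the paper: Ekeland's principle to produce a Palais--Smale sequence with bounded multipliers, the sign of $\lambda$ from $tf(t)\ge rF(t)$ and $m^\mu(a)<0$, the Splitting Lemma to reduce to a dichotomy scenario, and the strict sub-additivity of Lemma \ref{lem2.3}(ii) together with Shibata's minimizer (which is where $a>a_0$ enters) to rule it out. The only differences are cosmetic: you establish $m^\mu(a)<m^0(a)$ once and apply sub-additivity to $m^0$ directly, whereas the paper compares $m^0(c_j)\ge m^\mu(c_j)$ bubble-by-bubble and invokes Shibata's minimizer only in the case $b=0$, $k=1$; and you use the strong maximum principle where the paper cites Harnack.
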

			\begin{proof}
			 Let $\{u_n\}\subset \mathcal{S}_a$ be a minimizing sequence for $m^\mu(a)$.  Since $J_{\mu}$ is coercive in $\mathcal{S}_a$   uniformly in $ a$\color{black}, $\{u_n\}$ is bounded in $H^1(\mathbb{R}^N)$  uniformly in $a$. \color{black}Note that for any $u\in H^1(\mathbb{R}^N),$ $J_{\mu}(u)=J_{\mu}(|u|)$, then one can choose $\{u_n\}$ being a non-negative sequence. From Lemma \ref{lemma2.4}, there exists a non-negative $(PS)$ sequence $\{v_n\}$ for $J_{\mu}$ constraint on $\mathcal{S}_a$ with $||v_n-u_n||_{H^1}\rightarrow0$. Thus one has as $n\rightarrow\infty$,
			\begin{equation}
				J_{\mu}(v_n)\rightarrow m^\mu(a),~ (J_{\mu}|_{\mathcal{S}_a})'(v_n)\rightarrow0\quad\text{in}\, H^{-1}(\R^N).
			\end{equation}
			Then the Lagrange multiplier 
			\begin{equation}\label{eq5.4}
				\lambda_n:= -\frac{J'_{\mu}(v_n)v_n}{a^2}
			\end{equation}
			is  also bounded since $\{v_n\}$ is bounded in $H^1(\mathbb{R}^N)$  uniformly in $a $\color{black}. Up to a subsequence, there exist 
            $v\in H^1(\mathbb{R}^N)$ with $v\geq0$ and $\lambda\in \mathbb{R}$ such that 
            $$v_n\rightharpoonup v\quad\text{ in}\quad  H^1(\mathbb{R}^N)\quad\text{and}\quad \lambda_n\rightarrow \lambda\quad\text{ in}\quad  \mathbb{R}.$$
     Now we show $\lambda_n>0$ for $n$ large. On one hand, we note  that
       \begin{equation*}
       		\int_{\R^N}| \nabla v_n|^2dx-	\mu\int_{\R^N}\frac{h(x)}{|x|^2}v_n^2dx+\lambda_n	\int_{\R^N} v_n^2 dx=	\int_{\R^N} f(v_n)v_ndx+o(1) 
       \end{equation*}
       and 
         \begin{equation*}
        m^\mu(a)=\frac{1}{2}	\int_{\R^N}| \nabla v_n|^2dx-\frac{\mu}{2}	\int_{\R^N}\frac{h(x)}{|x|^2}v_n^2dx-	\int_{\R^N} F(v_n) dx+o(1).
        \end{equation*}
        Thus it yields  as $n\rightarrow +\infty$, 
        \begin{equation}\label{lambda}
        	\lambda_na^2=-2m(a)+\int_{\R^N} f(v_n)v_ndx -2	\int_{\R^N} F(v_n) dx+o(1)\geq -2m(a)+(q-2)\int_{\R^N} F(v_n) dx+o(1)
        \end{equation}
since $f(t)t\geq qF(t)$ due to $(f_2)$. On the other hand,  using $(f_1)-(f_3)$, we can see that $f(t)>0$ for $t>0$ and $f(t)<0$ for $t<0$, which yields that $F(0)=0<F(t)$ for any $t\ne 0$, hence $\int_{\R^N} F(v_n) dx>0$. %by the Hardy inequality, 
 %\begin{equation*}
%	\int_{\R^N} F(v_n) dx=-	m(a)+\frac{1}{2}	\int_{\R^N}| \nabla v_n|^2dx-\frac{\mu}{2}	\int_{\R^N}\frac{h(x)}{|x|^2}v_n^2dx\geq  -	m(a)+\frac{ \bar{\mu}-\mu}{2\bar{\mu}}| \nabla v_n|^2dx\geq -m(a),
%\end{equation*}
%this means  (this follows from $) \
\color{black}Since $m^\mu(a)<0$, one gets $\lambda_n a^2\ge-2m^\mu(a)>0$ for any $n$ sufficiently large and so that $\lambda\ge-\frac{2 m^\mu(a)}{a^2}>0$.
\vskip 0.1in

             Applying Lemma \ref{splitting}, either $v_n\to v$ strongly in $H^1(\R^N)$ and we are done, or there exists an integer $k\ge 1$, $k$ non-trivial solutions $w^1,\cdots,w^k\in H^1(\mathbb{R}^N)$ to the limit equation
			 \begin{equation} 
			 	-\Delta u+\lambda u =f(u)\quad\text{in}\quad\R^N
			 \end{equation}
			 and $k$ sequences $\{y_n^j\}\subset \mathbb{R}^N,1\leq j\leq k$ such that $|y_n^j|\rightarrow\infty,|y_n^i-y_n^j|\rightarrow +\infty$ for $i\neq j$ and
			 \begin{equation} \label{eq2.44}
			 	v_n=v+\sum_{j=1}^kw^j(\cdot-y_n^j )+o(1)~\text{strongly~in}~H^1(\mathbb{R}^N).
			 \end{equation}
			 Furthermore, one has
			 \begin{equation} \label{e2.45}
			 	\|v_n\|_2^2=\|v\|_2^2+\sum_{j=1}^k\|w^j\|_2^2+o(1)
			 \end{equation}
			 and
			 \begin{equation} 
             \label{e2.46}
			 	J_{\mu,\lambda}(v_n)= J_{\mu,\lambda}(v)+\sum_{j=1}^k J_{0,\lambda}(w^j)+o(1).
			 \end{equation}
		 Set $b:=||v||_2$ and $ c_j:=||w^j||_2\ge 0$, for $1\le j\le k$. Then $b\in[0,a)$, since $k\ge 1$, %Clearly, if $b=a$, then $k=0$ and $v_n\rightarrow v$ strongly in $L^2(\mathbb{R}^N)$, hence we are done. For this reason, in the sequel we will assume that %k\geq1$, this means
		 %\vskip0.1in
		 and $a^2=b^2+\sum_{j=1}^{k}c^2_j$.
%We show by contradiction that
%\begin{equation}\label{eq2.21}
 %   \liminf_{n\rightarrow\infty}\sup _{y\in \mathbb{Z}^3}||v_n-v_R||_{L^2(z+Q^3)}>0,
%\end{equation}
%where $Q^3=[0,1]^3$. 
%Suppose in contrast that $\sup\limits  _{y\in \mathbb{Z}^3}||v_n-v_R||_{L^2(z+Q^3)}\rightarrow0$. Then, $v_n\rightarrow v_R$ strongly in $L^r(\R^N )$ for $2 < r < 6$, (See \cite{m1}). By Lemma \ref{lem2.3}, we have
%\begin{equation*}
 %   m_R(a)< m_R(b)\leq J_R(v_R)\leq \liminf_{n\rightarrow\infty}J_R(v_n)=m_R(a),
%\end{equation*}
%which is a contradiction. Hence \eqref{eq2.21} holds
 Note that for each $j=1,\cdots,k$, 
                 \begin{equation*}
                    -\Delta w^j+\lambda w^j= f(w^j) .
                \end{equation*}
        % and $w^j(x)\geq0$?    
        %\color{black}By Harnack inequality \cite{Tru}, we know  $w^j>0$ in $\R^N$.
        \eqref{e2.45} and \eqref{e2.46} yields that
                \begin{equation*}
                	J_{\mu }(v_n)= J_{\mu }(v)+\sum_{j=1}^k J_{0}(w^j)+o(1).
                \end{equation*}
                As a consequence,  setting $m^\mu(0)=0$, \color{black}we have
                  \begin{equation}\label{ep1}
                \begin{aligned}
                   m^\mu(a)&=\lim_{n\rightarrow \infty}J_{\mu }(v_n)\\&=
                    \lim_{n\rightarrow \infty}\Big(J_{\mu }(v ) +\sum_{j=1}^k J_{0}(w^j)+o(1)\Big) 
                  \\&                    \geq m^\mu(b)+\sum_{j=1}^km^{0}(c^j).
                \end{aligned}
                  \end{equation}
                      From Lemma \ref{lem2.3}-(iii), we have $  m^\mu (a)\leq m^\mu (b)+\sum_{j=1}^{k} m^{0} (c_j)$, thus
                      \begin{equation}\label{rel-min-energy}
                      m^\mu(a)= m^\mu(b)+\sum_{j=1}^{k} m^{0} (c_j),
                      \end{equation}
                     that implies the  inequality in \eqref{ep1} is actually an identity, from where one can conclude
                     \begin{equation} \label{eq2.47}
                      J_{\mu }(v )=m^\mu (b),~J_{0}(w^j)=m^{0}(c_j) ~\text{for~ each}~ j=1,\cdots,k.
                      \end{equation}
                     
                 \textbf{(a)}  If $b=0$ and $k\geq2$, then $v =0,a^2= \sum_{j=1}^{k}c^2_j$ and 
            \begin{equation*}  
           	v_n= \sum_{j=1}^kw^j(\cdot-y_n^j )+o(1)~\text{strongly~in}~H^1(\mathbb{R}^N).
           \end{equation*}
      Thus, using that $h\ge 0$ we have
      $$m^\mu (a)=\sum_{j=1}^{k}m^{0}(c_j)\geq \sum_{j=1}^{k}m^\mu (c_j)> \sum_{j=1}^{k}\frac{c_j^2}{a^2}m^\mu(a)=m^\mu(a),$$ which is a contradiction.  
      \vskip 0.1in 
           If $b=0$ and $k=1,$ then $v =0,a^2=  c^2_1$, $v_n=w^1(\cdot  -y_n^1)$ and $m^\mu(a)=m^{0}(a).$  By \cite[Theorem 1.1]{shibata}, there exists $a_0>0$ such that for any $a>a_0$, $m^{0}(a)$ is achieved by $v_a\in \mathcal{S}_a$ with $v_a>0$, thus, using that $h\ne 0$ and $h\ge 0$, we have
       \begin{equation*}
        m^\mu(a)\leq J_{\mu}(v_a)<J_{0}(v_a)=m^{0}(a),
       \end{equation*}
       which is impossible. We stress that here it is the only step in which we use that $\mu\ne 0$ and $h\ne 0$. 
               \vskip 0.1in

  \textbf{(b)}  If  $b\in(0,a)$ and $k\geq1$,  then $v_n=v+\sum\limits_{j=1}^kw^j(\cdot-y_n^j )+o(1)$ and $a^2= b^2+\sum\limits_{j=1}^{k}c^2_j$. As a consequence, by \eqref{rel-min-energy} and the fact that $h\ge 0$, one has
  \begin{equation*}
       m^\mu(a)= m^\mu(b)+\sum\limits_{j=1}^{k} m^{0} (c_j)\geq m^\mu(a)= m^\mu(b)+\sum_{j=1}^{k} m ^\mu(c_j)>\frac{b^2+\sum\limits_{j=1}^kc_j^2}{a^2}m^\mu(a)=m^\mu(a),
  \end{equation*}
				which is also impossible.    
				\vskip0.1in
			\color{black}	Combining \textbf{(a)} with  \textbf{(b)}, one can conclude that $b=a$ and $v_n\rightarrow v $ strongly in $H^1(\mathbb{R}^N)$.
				Now we prove $v >0$ in $\R^N$. It follows from \eqref{eq2.44}-\eqref{e2.46} that 
				\begin{center}
					$J_{\mu }(v )=m^\mu(a)$ and $v_n\rightarrow v $ in $H^1(\R^N)$.
				\end{center}
				Then by the Harnack inequality \cite{Tru}, we know $v >0$ in $\R^N$. In fact, if there exists $x_0\in\R^N$ such that
$v (x_0) = 0.$ Since $v\geq 0$,  there exists $x_1 \in \R^N$ such that $v (x_1) >
 0$. Have this in mind,
fix $L > 0$ large enough such that $x_0, x_1 \in B_L(0)$. By \cite[Theorem 8.20]{Tru}, 
there exists $C > 0$ such that
\begin{equation*}
    \sup_{y\in B_L(0)}v (y)\leq C \inf_{y\in B_L(0)}v (y)
\end{equation*}
which is impossible. Therefore, $v(x)>0$ for any $x\in\R^N$.  
				
				\vskip0.05in 
			\color{black} Set $u :=v $, this lemma is proved.
			\end{proof}
	\begin{remark}
	Theorem \ref{thm1.1} follows from Lemma \ref{lem2.2} and \ref{lem2.6}.
\end{remark}

 We emphasize that the mass-subcritical growth of $f$ is crucial for the existence of a minimizer. Indeed, as the following remark shows, this is generally not possible if $f$ has mass-supercritical growth. 

 \begin{remark}\label{rem2.2}
 If $f(t)=|t|^{p-2}t$ with $p\in (2+\frac{4}{N}, 2^*)$ then the following is true:
 \begin{itemize}
     \item The functional $J_\mu$ is unbounded from below on $\mathcal{S}_a$ for any $\mu\in[0,\bar{\mu})$ and $a>0$. In fact, taking $u\in \mathcal{S}_a$ and recalling the scaling $u^t(\cdotp):=t^{N/2}u(t\cdotp)$, for $t>0$ we have $||u^t||_2=||u||_2$ and
     $$J_\mu(u^t)\le\frac{t^2}{2}||\nabla u||^2-c_p t^{\frac{N(p-2)}{2}}||u||^p_p\to-\infty, \quad \text{as}\quad t\to\infty,$$
     for any $\mu\in[0,\bar{\mu})$.
     \item If $V\in L^s(\R^N)$ for some $s\in[\frac{N}{2},\infty)$, then the functional $E$ defined in \eqref{def-E} is unbounded from below on $\mathcal{S}_a$ for any $a>0$. In fact, the H\"{o}lder inequality and the Sobolev embedding $D^{1,2}(\R^N)\subset L^{\frac{2s}{s-2}}(\R^N)$ yields that
     $$\left|\int_{\R^N}V(x)u^2(x)dx\right|\le ||V||_s || u||_{\frac{2s}{s-2}}^2\le c_s||V||_s ||\nabla u||^2_2,$$
     so that
     $$E(u^t)\le \frac{1}{2}(1+c_s||V||_s)||\nabla u||^2_2-c_p t^{\frac{N(p-2)}{2}}||u||^p_p\to-\infty, \qquad \text{as}\quad  t\to\infty.$$
     \item Similarly, if $V\in L^\infty(\R^N)$, we have
     $$E(u^t)\le \frac{1}{2}||\nabla u||^2_2+\frac{||V||_\infty a^2}{2}-c_p t^{\frac{N(p-2)}{2}}||u||^p_p\to-\infty, \qquad  \text{as}\quad  t\to\infty.$$
 \end{itemize}
 As a consequence, no minimizer of the aforementioned functionals can be found on $\mathcal{S}_a$.
 \end{remark}

 \section{Proof of Theorem \ref{thm-rad}}\label{sect3}
 \begin{proof}[\bf Proof of existence of Theorem \ref{thm-rad}]
Lemmas \ref{lem2.2} and \ref{lemma2.4} hold true in the radial setting too, that is replacing $H^1(\R^N)$ by $H^1_r(\R^N)$ everywhere. In particular, we have $m_r^\mu(a)\in(-\infty,0)$. As a consequence, given $a>0$ and $\mu\in[0,\bar{\mu})$, we can construct a bounded Palais-Smale sequence $\{u_n\}\subset \mathcal{S}_{a,r}$ of $J_\mu$ such that $J_\mu(u_n)\to m^0_r(a)$. In other words, we have
 $$J_\mu(u_n)\to m^\mu_r(a),\quad J'_\mu(u_n)+\lambda_nu_n=o(1)\quad\text{in}\quad H^{-1}(\R^N),$$
for some bounded sequence $\{\lambda_n\}\subset\R$. As a consequence, there exist $u^\mu\in H^1_r(\R^N)$ and $\lambda^\mu\in\R$ such that, up to a subsequence $u_n\rightharpoonup u^\mu$ and $\lambda_n\to\lambda^\mu\ge-\frac{2 m^\mu(a)}{a^2}>0$. In particular, we have
$$-\Delta u^\mu-\mu\frac{h(x)}{|x|^2}u^\mu+\lambda^\mu u^\mu=f(u^\mu)\qquad\text{in}\quad\R^N,\quad||u^\mu||_2\le a.$$
It remains to prove that $u_n\to u^\mu$ strongly in $H^1(\R^N)$, which proves that $||u^\mu||_2=a$ and $J_\mu(u^\mu)=m^\mu_r(a)$.\\

In order to do so, we recall that, thanks to the hardy inequality we have
 $$\int_{\R^N}|\nabla u|^2dx-\mu\int_{\R^N}\frac{h(x)u^2}{|x|^2}dx+\lambda^\mu\int_{\R^N}u^2 dx\ge\frac{\bar{\mu}-\mu}{\bar{\mu}}||\nabla u||^2_2+\lambda_\mu||u||_2^2\qquad\forall\,u\in H^1(\R^N),$$
which yields that the operator
\begin{equation}
\label{def-L-mu}
L_\mu:=-\Delta-\frac{\mu h(x)}{|x|^2}+\lambda^\mu:H^1(\R^N)\to H^{-1}(\R^N)
\end{equation}
is invertible with bounded inverse for any $\mu\in(0,\bar{\mu})$. Moreover, due to the compactness of the embedding $H^1_r(\R^n)\subset L^s(\R^N)$, for $s\in(2, 2^*)$, we have
$$u_n=L_\mu^{-1}(f(u_n)+o(1))\to L_\mu^{-1}(f(u^\mu))\quad\text{in}\quad H^1(\R^N).$$
Combining this with the weak convergence $u_n\rightharpoonup u^\mu$ in $H^1(\R^N)$,   we conclude the proof.
\end{proof}

To summarize the proof of Theorem \ref{thm-rad}, it only remains to prove the properties of $u^{\mu}$ for the special case $h=1$, that will be concluded by recalling the following result, which was proved in \cite{LLT=JDE}.

\begin{remark}\label{rem2.1}
	\label{lem3.1}  
	If $h=1$  and $\mu\in(0,\bar{\mu})$ then for $a>a_0$, let $(u^{\mu} ,\lambda ^{\mu} )\in H^1(\R^N)\times \R^+$ be a  solution to \eqref{q}, then by \cite[Proposition 3.3, Proposition 3.5]{LLT=JDE}, we can see that  $u^\mu(x)\in C^2(\R^N \setminus \{0\})$ and
	there are positive constants $\delta$ and $R$ such that for $|\alpha| \leq 2$,
	\begin{equation*}
		|D^{\alpha}u^{\mu}(x)|\leq C\,\text{exp}(-\delta\, |x|),\quad \forall \, |x|\geq R
	\end{equation*}and 
	\begin{equation*}
		u^{\mu} (x)\leq C |x|^{-(\sqrt{\bar{\mu}}-\sqrt{\bar{\mu}-\mu} )},\quad\forall \,x\in B_{\rho}(0)\setminus \{0\},
	\end{equation*}
	where $\rho>0$ is suitably small.
\end{remark} 
                  \color{black}      
						\section{Proof of Theorem \ref{th1.2}}\label{se2}
						Now we study the asymptotic behavior with respect to $\mu\rightarrow 0^+$. We begin with the following Lemma.
                        \begin{lemma}\label{lemma-rad-rearr}
                        In the above notations, we have $m^0_r(a)=m^0(a)$, for any $a>0$.
                        \end{lemma}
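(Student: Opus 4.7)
The plan is to show $m^0_r(a)=m^0(a)$ by a standard Schwarz symmetrization argument, exploiting that the limit functional $J_0$ contains no potential and that $F$ depends only on $|u|$.

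First, the inclusion $\mathcal{S}_{a,r}\subset\mathcal{S}_a$ gives immediately $m^0(a)\le m^0_r(a)$, so the content is in the opposite inequality. Fix $u\in\mathcal{S}_a$. Since $f$ is odd (by $(f_1)$), $F$ is even, hence $F(u)=F(|u|)$, and by Lemma~\ref{lem2.2}'s proof ingredient (Theorem 6.17 in Lieb--Loss) $\|\nabla|u|\|_2\le\|\nabla u\|_2$, so $J_0(|u|)\le J_0(u)$ and $|u|\in\mathcal{S}_a$. We may therefore assume $u\ge 0$ from the outset.

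Next, introduce the Schwarz symmetric-decreasing rearrangement $u^\ast$ of $u$. The relevant classical facts are: (i) $u^\ast\in H^1_r(\R^N)$ with $\|u^\ast\|_2=\|u\|_2=a$, so $u^\ast\in\mathcal{S}_{a,r}$; (ii) the P\'olya--Szeg\H{o} inequality
\begin{equation*}
\int_{\R^N}|\nabla u^\ast|^2\,dx\le\int_{\R^N}|\nabla u|^2\,dx;
\end{equation*}
(iii) the equimeasurability of $u$ and $u^\ast$, which gives, for any Borel $\Phi\colon[0,\infty)\to\R$ with $\Phi(0)=0$ and $\Phi(u)\in L^1(\R^N)$, the identity $\int_{\R^N}\Phi(u^\ast)\,dx=\int_{\R^N}\Phi(u)\,dx$. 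Applying (iii) with $\Phi=F|_{[0,\infty)}$ (recall $u\ge 0$ and $F(0)=0$, and $F(u)\in L^1$ by the growth bound \eqref{f'} together with $u\in H^1(\R^N)\hookrightarrow L^q\cap L^p$ for the mass-subcritical exponents), we obtain $\int_{\R^N}F(u^\ast)\,dx=\int_{\R^N}F(u)\,dx$.

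Combining (ii) and the $F$-identity yields $J_0(u^\ast)\le J_0(u)$. Since $u^\ast\in\mathcal{S}_{a,r}$, we conclude
\begin{equation*}
m^0_r(a)\le J_0(u^\ast)\le J_0(u),
\end{equation*}
and taking the infimum over $u\in\mathcal{S}_a$ gives $m^0_r(a)\le m^0(a)$, completing the equality. I do not foresee any real obstacle: the only point to double-check is the applicability of P\'olya--Szeg\H{o} on $H^1(\R^N)$ and the equimeasurability for $F$, both of which are standard textbook facts (e.g. Lieb--Loss, Chapter~3 and Chapter~7). The key conceptual reason the argument works is the \emph{absence of the Hardy-type potential in $J_0$}: the rearrangement can freely lower the Dirichlet term without interacting with any spatially inhomogeneous weight.
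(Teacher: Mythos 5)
Your proof is correct and follows essentially the same route as the paper, which also establishes $m^0_r(a)\le m^0(a)$ via a radial rearrangement $u^*$ satisfying the P\'olya--Szeg\H{o} inequality, equimeasurability of the nonlinear term, and conservation of the $L^2$-norm. Your write-up is in fact slightly more careful, since you first reduce to $u\ge 0$ before symmetrizing.
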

\begin{proof}
On the one hand, using that $\mathcal{S}_{a,r}\subset\mathcal{S}_a$, we have $m^0_r(a)\ge m^0(a)$. On the other hand, given a function $u\in H^1(\R^N)$, the radial rearrengment $u^*\in H^1_r(\R^N)$ constructed by Kawohl \cite{K} fulfills
$$\int_{\R^N}|\nabla u^*|^2dx\le \int_{\R^N}|\nabla u^*|^2dx,\quad \int_{\R^N}f(u^*)dx=\int_{\R^N}f(u)dx,\quad ||u^*||_2=||u||_2,$$
so that $J_0(u^*)\le J^0(u)$, which yields that $m^0_r(a)\le m^0(a)$.
\end{proof}

                        %To emphasize the relevance,  for any $a>0$ we rewrite $m^{\mu}(a):=\min\limits_{u\in \mathcal{S}_a}J_{\mu}(u)$ and $(u^{\mu} ,\lambda^{\mu} )\in H^1(\R^N)\times\R$ denoting a  solution to \eqref{q}.
						\begin{lemma}\label{lemma2.9}
							Assume that  $(f_1)-(f_3)$   hold and $\mu\in(0,\bar{\mu})$.  Let $(u^{\mu} ,\lambda^{\mu} )\in H^1(\R^N)\times [-\frac{2 m_r^\mu(a)}{a^2},0)$ be the global minimizer to \eqref{q} and $m^{\mu}_r(a)$ be the least energy, then
                            $$m^{\mu}_r(a)\rightarrow m^0_r(a),\quad (u^{\mu},\lambda^{\mu})\rightarrow (u^{0},\lambda^{0})\quad\text{in}\quad H^1(\R^N)\times \R,$$ where $(u^{0},\lambda^{0})\in H^1(\R^N)\times [-\frac{2 m^0(a)}{a^2},0)$ is a radial solution of the limit equation \eqref{q_infty}.
						\end{lemma}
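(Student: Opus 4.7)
The strategy is to proceed in four coupled steps: establish convergence of the energy levels, deduce boundedness and weak limits, pass to the limit in the Euler--Lagrange equation, and finally upgrade to strong $H^1$ convergence. The main obstacle will be the last one, since the radial embedding $H^1_r(\R^N)\hookrightarrow L^2(\R^N)$ is not compact and so the mass constraint $\|u^0\|_2=a$ does not pass to weak limits for free.

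First I would prove $m^\mu_r(a)\to m^0_r(a)=m^0(a)$ (the equality being Lemma \ref{lemma-rad-rearr}). Since $h\ge 0$, any $u\in\mathcal{S}_{a,r}$ satisfies $J_\mu(u)\le J_0(u)$, giving $m^\mu_r(a)\le m^0_r(a)$. For the reverse, the coercivity estimate \eqref{rel-coercivity} applied with $\mu\in[0,\bar{\mu}/2]$ and the bound $J_\mu(u^\mu)\le m^0_r(a)<0$ yields a uniform control on $\|\nabla u^\mu\|_2$; the Hardy inequality then gives $\int \frac{h(u^\mu)^2}{|x|^2}dx\le \frac{1}{\bar{\mu}}\|\nabla u^\mu\|_2^2\le C$, so $J_0(u^\mu)=J_\mu(u^\mu)+O(\mu)\ge m^0_r(a)$ implies $m^\mu_r(a)\ge m^0_r(a)-O(\mu)$. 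Consequently $\{u^\mu\}$ is bounded in $H^1_r(\R^N)$, and testing the Euler--Lagrange equation against $u^\mu$ also shows $\{\lambda^\mu\}$ is bounded, while the constraint $\lambda^\mu\ge -2m^\mu_r(a)/a^2$ combined with the energy convergence keeps $\lambda^\mu$ uniformly bounded away from $0$. Up to a subsequence, $u^\mu\rightharpoonup u^0$ in $H^1_r(\R^N)$, strongly in $L^p(\R^N)$ for $p\in(2,2^*)$ by radial compactness, and $\lambda^\mu\to\lambda^0>0$. Passing to the limit in the equation is then routine: the Hardy term $\mu h u^\mu/|x|^2$ vanishes in $H^{-1}$ because its dual norm is controlled by $C\mu\|\nabla u^\mu\|_2$, the nonlinear term converges by \eqref{f'} plus radial compactness, and one concludes that $u^0$ solves $-\Delta u^0+\lambda^0 u^0=f(u^0)$.

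The crux is then the strong $H^1$ convergence, which I would handle by subtracting the two equations:
\[
(-\Delta+\lambda^0)(u^\mu-u^0)=[f(u^\mu)-f(u^0)]+\frac{\mu h(x)}{|x|^2}u^\mu+(\lambda^0-\lambda^\mu)u^\mu.
\]
Each of the three terms on the right tends to $0$ in $H^{-1}(\R^N)$: the first by the compact embedding $H^1_r\hookrightarrow L^p$ combined with the subcritical growth of $f$, the second by the Hardy estimate multiplied by $\mu\to 0$, and the third by $\lambda^\mu\to\lambda^0$ together with the $H^1$-boundedness of $\{u^\mu\}$. Since $\lambda^0>0$, the operator $-\Delta+\lambda^0$ is an isomorphism $H^1(\R^N)\to H^{-1}(\R^N)$ with bounded inverse, so $u^\mu\to u^0$ strongly in $H^1(\R^N)$. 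This bypasses the failure of $H^1_r\hookrightarrow L^2$ to be compact and yields $\|u^0\|_2=a$, hence $u^0\in\mathcal{S}_{a,r}$ and $J_0(u^0)=\lim J_\mu(u^\mu)=m^0(a)$, so $(u^0,\lambda^0)$ is the desired radial minimizer of the limit problem.
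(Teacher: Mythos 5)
Your proposal is correct and follows essentially the same route as the paper: the energy-level convergence $m^\mu_r(a)\to m^0_r(a)$ is obtained from the equi-boundedness of near-minimizers with negative energy plus the Hardy inequality to control the perturbation term, and the strong $H^1$ convergence is obtained by inverting the isomorphism $-\Delta+\lambda^0:H^1(\R^N)\to H^{-1}(\R^N)$ (possible since $\lambda^0\ge -2m^0(a)/a^2>0$) after showing that the nonlinear, Hardy, and multiplier-difference terms vanish in $H^{-1}$ via the compact embedding $H^1_r(\R^N)\hookrightarrow L^p(\R^N)$. The only cosmetic difference is that the paper packages the first step as a uniform continuity and monotonicity statement for $\mu\mapsto m^\mu_r(a)$, while you sandwich the limit directly; the substance is identical.
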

						\begin{proof}
							First we show that for given $\mu_0\in(0,\bar{\mu})$, $m^{\mu}(a)$ is uniformly continuous and non-increasing in $\mu\in[0,\mu_0]$.\\ 
                            
                            By the definition of $m^{\mu}_r(a)$ and the fact that $h\ge 0$, it is easy to see $m^{\mu}_r(a)$ is non-increasing in $[0,\mu_0]$. For the continuity, we need to prove
							\begin{equation}
								\label{continuity}\lim_{n\rightarrow+\infty} m_r^{\mu -\color{black}\frac{1}{n}}(a)\leq m^{\mu}_r(a) \leq 	\lim_{n\rightarrow+\infty} m_r^{\mu +\color{black}\frac{1}{n}}(a)
							\end{equation}
for $\mu\in(0,\mu_0)$.  For $\mu=0$, it is enough to prove the second inequality, while for $\mu=\mu_0$, it is enough to prove the first inequality.\\
                            
Let us prove the first inequality. For any $\varepsilon>0$ small enough, there exists $u\in\mathcal{S}_{a,r}$ such that $$J_\mu(u)<m^\mu_r(a)+\frac{\varepsilon}{2}<0.$$
Moreover, by relation \eqref{rel-coercivity}, for any $\mu\in[0,\mu_0]$ there exists a constant $\bar{C}>0$ such that, 
\begin{equation}
\label{equi-bd}
\text{if}~~ v\in\mathcal{S}_a~~ \text{satisfies}~~ J_\mu(v)<0,~~\text{then}~~||\nabla v||_2\le \bar{C}.  
\end{equation}
As a consequence
\begin{equation}
\begin{aligned}
m^{\mu-\frac{1}{n}}_r(a)&\le J_{\mu-\frac{1}{n}}(u)=J_\mu(u)+\frac{1}{2n}\int_{\R^N}\frac{h(x)u^2(x)}{|x|^2}dx\\
&<m^\mu_r(a)+\frac{\varepsilon}{2}+\frac{\bar{\mu}}{2n}||\nabla u||^2_2<m^\mu(a)+\varepsilon
\end{aligned}
\end{equation}
provided $n>n_0$, for some $n_0=n_0(\varepsilon,\mu)>0$. This concludes the proof of  the first inequality.\\

In order to prove the second inequality, we note that, for any $n>0$ there exists $u_n\in\mathcal{S}_a$ such that $$J_{\mu+\frac{1}{n}}(u_n)<m^{\mu+\frac{1}{n}}_r(a)+\frac{1}{n}<0.$$
As a consequence, using once again \eqref{equi-bd}, we can see that there exists $ \tilde{C}_\mu>0$ such that
\begin{equation}\notag
%\begin{aligned}
J_\mu(u_n)-\frac{\tilde{C}_\mu}{n}\le J_\mu(u_n)-\frac{1}{2n}\int_{\R^N}\frac{h(x)u_n^2(x)}{|x|^2}dx=J_{\mu+\frac{1}{n}}(u) 
<m^{\mu+\frac{1}{n}}_r(a)+\frac{1}{n},
%\end{aligned}   
\end{equation}
which yields that
\begin{equation}
    m^\mu_r(a)\le J_\mu(u_n)<m^{\mu+\frac{1}{n}}_r(a)+\frac{1+\tilde{C}_\mu}{n}.
\end{equation}
Taking the limit as $n\to\infty$, we have the statement.\\

In particular \eqref{continuity} shows that $m^\mu_r(a)\to m^0_r(a)$ as $\mu\to 0^+$.

							\vskip0.1in
                           
						 Let $\{\mu_n\}\subset(0,\infty)$ be a sequence such that $\mu_n\to 0^+$. For the sake of simplicity, we will drop the subscript $n$ and write $\mu:=\mu_n$. By \eqref{equi-bd}, we know $\{u^{\mu }\}\subset H^1_r(\R^N)$  is bounded in $H^1(\R^N)$ and by \eqref{eq5.4},  $\{\lambda^{\mu }\}$ is also bounded in $\R$. Then up to a subsequence,  there exist 
							$u^0\in H^1_r(\mathbb{R}^N)$ with $u^0\geq0$ and $\lambda^0\in [-\frac{2 m^0(a)}{a^2},0)$ such that 
							$$u^{\mu }\rightharpoonup u^0\quad\text{ in}\quad  H^1(\mathbb{R}^N)\quad\text{and}\quad \lambda^{\mu}\rightarrow \lambda^0\quad\text{ in}\quad  \mathbb{R}.$$ %Taking the weak limit in $J'_{\mu}(u^{\mu})+\lambda^{\mu}u^{\mu}=0$, we have  $J'_{0}(u^{0})+\lambda^{0}u^{0}=0$, that is 
							%\begin{equation*}
								%-\Delta u^{0}+\lambda^0 u^{0}=f(u^{0}).
							%\end{equation*}
							We note that
                            $$L_0 u^\mu=f(u^\mu)+(\lambda^0-\lambda^\mu)u^\mu+\mu\frac{h(x)u^\mu}{|x|^2}\qquad\text{in}\,\R^N,$$
                            where $L_0$ is defined in \eqref{def-L-mu}, or equivalently
                            $$u^\mu=L_0^{-1}\left(f(u^\mu)+(\lambda^0-\lambda^\mu)u^\mu+\mu\frac{h(x)u^\mu}{|x|^2}\right).$$
						
                        We stress that $L_0$ is invertible thanks to the fact that $\lambda_0\ge-\frac{2 m^0(a)}{a^2}>0$. Using that $u^\mu\in H^1_r(\R^N)$ and the compactness of the embedding $H^1_r(\R^N)  \hookrightarrow L^s(\R^N)$, for $s\in(2, 2^*)$, we can see that  
                        $$u^\mu=L_0^{-1}\left(f(u^\mu)+(\lambda^0-\lambda^\mu)u^\mu+\mu\frac{h(x)u^\mu}{|x|^2}\right)\to L_0^{-1}(f(u^0)).$$
                        Since $u^\mu\rightharpoonup u^0$ in $H^1(\R^N)$, one can summarize
                        $$u^\mu\to u^0\quad\text{strongly}\quad 
                        \text{in}\quad H^1(\R^N)\quad$$
                        and
                        $$ -\Delta u^0+\lambda^0 u^0=f(u^0).$$
                        Since the sequence $\{\mu_n\}$ is arbitrary, the above results conclude that 
                        \begin{equation*}
                        	 (u^\mu,\lambda^\mu)\to(u^0,\lambda^0)\quad\text{in}\quad H^1(\R^N)\times\R\quad  \text{as}\quad \mu\to 0^+.
                        \end{equation*} 
                    \end{proof}
                    
                    Now it remains to prove that $J_0(u^0)=m^0(a)$. 
						\begin{remark}
							Theorem \ref{th1.2} follows from Lemmas \ref{lemma2.9} and \ref{lemma-rad-rearr}. In fact, due to the continuity of $J_\mu$ with respect to the parameter $\mu$ and to the variable $u$, we have
                            $J_\mu(u^\mu)\to J_0(u^0)$ as $\mu\to 0^+$. However, due to Lemma \ref{lemma2.9} and \ref{lemma-rad-rearr}, one has $$J_\mu(u^\mu)=m^\mu_r(a)\to m^0_r(a)=m^0(a)\quad\text{as}\quad \mu\to 0^+,$$
                            which yields that $J_0(u^0)=m^0(a)$. This means that $u^0$ is a minimizer of $J_0$ on $\mathcal{S}_a$.
						\end{remark}
              
				\qquad

							\noindent{\bf Declarations of interest}
							
							The authors have no interest to declare.

							~\\
							\color{black}\noindent{\bf Acknowledgment}
							
							X. Peng is supported by China Postdoctoral Science Foundation (2025M773085).  M. Rizzi is partly supported by the DFG project (62202684). The authors would also like to thank the referees for their valuable comments.

							%\bibliographystyle{abbrv}
							%\bibliography{ref_normalized-solution}

							%%%%%%%%%%%%%%%%%%%%%%%%%%%%%%%%%%%%%%%%%%%%%%%%%%%%%%%%%%%%%%%%%%%%%%%%%%%%%%%%%%%%%%%%%%%%%%%%%%%%%%%%%%%%%%%%%%%%%%%%%%%
							
						\end{document}